\begin{document}
\title{ Reproducing kernels of some weighted Bergman spaces }
\author{Guan-Tie Deng, Yun Huang, Tao Qian}

\author{Guan-Tie~Deng \thanks
{School of  Mathematical Sciences, Beijing
Normal University, Beijing, 100875. Email:denggt@bnu.edu.cn.\ This
work was partially supported by NSFC (Grant 11971042) and by SRFDP
(Grant 20100003110004)},\
Yun Huang \thanks{Corresponding author. Department of Mathematics, Faculty of Science and Technology, University of Macau, Macao (Via Hong
Kong). Email: yhuang12@126.com.},\
Tao~Qian \thanks{ Institute of Systems Engineering, Macau University of Science and Technology, Avenida Wai Long, Taipa, Macau). Email: tqian1958@gmail.com. The work is supported by the Macau Science and Technology foundation No.FDCT079/2016/A2 , FDCT0123/2018/A3, and the Multi-Year Research Grants of the University of Macau No. MYRG2018-00168-FST.}
}
\date{}
\maketitle
\begin{center}
\begin{minipage}{165mm}
{{\bf Abstract} \\
Herein, the theory of Bergman kernel is developed to the weighted case. A general form of weighted Bergman reproducing kernel is  obtained, by which we can calculate concrete Bergman kernel functions for specific weights and domains. }\\

{\bf Keywords}\ \ reproducing kernel, reproducing kernel Hilbert space, weigted Bergman spaces   \\

\end{minipage}
\end{center}

\section{Introduction}
\newtheorem{theorem}{\sc{Theorem}}
\newtheorem{lemma}{\sc{Lemma}}
\newtheorem{corollary}{\sc{Corollary}}

The theory of Bergman spaces has, in the past several  decades, become important in complex analysis of both one and several complex variables, see \cite{SGK2} and \cite{SGK1}. Recall that, for an arbitrary domain $\Omega\subset\mathbb C^n$, the Bergman space $A^p(\Omega)$ is defined as the collection of analytic functions $F$ that satisfy
\begin{equation*}
\|F\|_{A^p}=\left\{\int_{\Omega}|F(z)|^pdA(z)\right\}^{\frac1p}<\infty,
\end{equation*}
where $dA(z)=dxdy$ is the Lebesgue measure on $\mathbb C^n$.
When $p=2$, the reproducing kernel plays an important role in the Hilbert space. For the classical Bergman spaces the reproducing properties and biholomorphic invariance are investigated in \cite{SM} and \cite{PD}.

Bergman kernels have also been considered in some weighted cases.
Since a reproducing kernel deliveries certain fundamental information of the corresponding space, it is important to obtain the concrete form of the kernel function.
However, we must confess that their weighted Bergman kernel can almost never be calculated explicitly except for some special cases. Among the latter, the Bergman kernels are given for functions defined on the Hermitian ball and polydisc in \cite{SGK2}. Some concrete expressions of the Bergman kernel are also available for some classical homogeneous bounded symmetric domains of Cartan in \cite{HLG}.

Since the set of polynomials is dense in the Bergman spaces of bounded domains, the corresponding reproducing kernels can be obtained from the general representation formula $K(z,w)=\sum_{n=1}^{\infty}\phi_n(z)\overline{\phi_n(w)}$, where $\{\phi_n\}$ is any complete orthonormal basis obtained through orthonormalization of polynomials.
However, for unbounded regions, density of polynomials can not be guaranteed. In such case appropriate weight functions are introduced to make the weighted polynomials dense in certain Bergman spaces of unbounded domains. Then the kernel representations can be similarly obtained. In this paper, we apply the Laplace transform to the case of Bergman spaces on tube domains, which will be an effective and new method to calculate reproducing kernels.

Herein, we develop the theory of the weighted Bergman spaces and obtain a general representation formula of the kernel function for the spaces on tubular domains. As a complementary party to the general study, we calculate the concrete forms of the Bergman kernels for some special weights on the tube domains.

In some previous studies the concerned reproducing kernels can be computed by using our general representation formula, since the given weight functions satisfy the set conditions in our theorems. For example, taking the weight function $\rho(iy)=\frac{2y^{2q-2}}{\pi\Gamma(2q-1)}$, we can derive that the Bergman--Selberg reproducing kernel on the upper half plane is in the form of $K_q(z,\omega)=\Gamma(2q)\left(\frac{i}{z-\bar\omega}\right)^{2q}$ with $q>\frac12$, which is introduced in \cite{SS}. For the weight function $\rho(iy)=y^{v-1}$, a direct computation gives that the corresponding kernel is of the form $K(z,w)=\frac{2^{v-1}v}{\pi}\left(\frac{z-\bar w}{i}\right)^{-v-1}$, which is studied in \cite{DBAB}.
For weighted Bergman spaces associated with Lorentz cones, referring to \cite{DBABMMPFR}, the kernel functions can be also obtained by our formula.
The related calculation process is given as an example in the final section.
Especially, in the un-weighted case, i.e., letting $\rho(iy)=1$, it follows that the classical Bergman kernel on the upper half plane is $K(z,w)=\frac{1}{\pi}\left(\frac{i}{z-\bar w}\right)^2$.

\section{Preliminaries}
Let $\Omega$ be an arbitrary domain (an open connected set) in the complex  $n$-dimensional Euclidean space $\mathbb C^n=\{z=x+iy:x,y\in\mathbb R^n\} $. Suppose that $\rho(z)$ is a positive continuous function on $\Omega$ that takes the value $0$ for $z\notin\Omega$. We consider the weighted volume measure
\begin{equation*}
dA_{\rho}(z)=\rho(z)dA(z),
\end{equation*}
where $dA(z)=dxdy $ is the Lebesgue measure on $\mathbb C^n$. For $p>0$, we denote by $L^p_\rho$ the space of measurable  functions on $\Omega $ such that
\begin{equation}
\|F\|_{L^p_{\rho}}=\left(\int_{\Omega}|F(z)|^pdA_{\rho}(z)\right)^{\frac1p}<\infty.\label{anorm}
\end{equation}
The space of such functions is called the weighted Lebesgue  space with weight $\rho$. The quantity $
\|F\|_{L^p_{\rho}} $ is called the norm of the function $F$, which is a true norm if $p\ge 1$.

We denote by $A^p_\rho$ the collection of functions $F$ that are holomorphic on $\Omega $ and satisfy the condition (\ref{anorm}). Such a class is called the weighted Bergman space with weight $\rho$. It is obvious that $A^p_\rho\subset L^p_\rho$.

We first assert that functions in the weighted Bergman space cannot grow too rapidly near the boundary.

\begin{lemma} Point-evaluation is a bounded linear functional in each weighted Bergman space
$A^p_\rho $. More specifically, each function $F\in A^p_\rho$
has the property
\begin{eqnarray}
|F(z)|&\leq& \left (\omega_{2n}\varepsilon_z\right )^{-\frac{1}{p}} \left (\delta_z\right)^{-\frac{2n}{p}}\|F\|_{A_{\rho}^p}. \label{riesz}
\end{eqnarray}
Here, $\omega_{2n}=\frac{\pi ^n}{n!} $ is the volume of unit ball $B_{2n}(0,1)$, $\delta_z=\min \{1,2^{-1}\mbox{dist} (z, \partial \Omega)\}$ where  $\mbox{dist} (z, \partial \Omega)\}$ is the distance from $z$ to the boundary of $\Omega$, and $\varepsilon_z=\min\{\rho(\zeta):|\zeta - z|\leq\delta_z\}$.
\end{lemma}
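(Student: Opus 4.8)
The plan is to derive (\ref{riesz}) from the sub-mean value property of $|F|^p$ combined with a pointwise lower bound for the weight on a small ball around $z$. First I would record the standard fact that, for $F$ holomorphic on $\Omega$ and any $p>0$, the function $|F|^p$ is subharmonic on $\Omega$ viewed as an open subset of $\mathbb R^{2n}$: indeed $\log|F|$ is plurisubharmonic, hence subharmonic, and $t\mapsto e^{pt}$ is increasing and convex, so $|F|^p=e^{p\log|F|}$ is subharmonic too. (If one prefers to avoid several-variable potential theory, the same inequality is obtained by iterating the one-variable sub-mean value inequality over circles.)

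Next, fix $z\in\Omega$ and set $r=\delta_z=\min\{1,2^{-1}\mbox{dist}(z,\partial\Omega)\}$, which is strictly positive since $\Omega$ is open. Because $r<\mbox{dist}(z,\partial\Omega)$, the closed ball $\overline{B_{2n}(z,r)}$ is a compact subset of $\Omega$, so the sub-mean value inequality applies and gives
\begin{equation*}
|F(z)|^p\le\frac{1}{\omega_{2n}r^{2n}}\int_{B_{2n}(z,r)}|F(\zeta)|^p\,dA(\zeta),
\end{equation*}
where $\omega_{2n}r^{2n}$ is exactly the Lebesgue volume of $B_{2n}(z,r)$, with $\omega_{2n}=\pi^n/n!$ the volume of the unit ball. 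Since $\rho$ is positive and continuous on $\Omega$ and $\overline{B_{2n}(z,r)}\subset\Omega$ is compact, the quantity $\varepsilon_z=\min\{\rho(\zeta):|\zeta-z|\le\delta_z\}$ is attained and strictly positive, and $\rho(\zeta)\ge\varepsilon_z$ for every $\zeta\in B_{2n}(z,r)$. Hence
\begin{equation*}
\int_{B_{2n}(z,r)}|F(\zeta)|^p\,dA(\zeta)\le\frac{1}{\varepsilon_z}\int_{B_{2n}(z,r)}|F(\zeta)|^p\rho(\zeta)\,dA(\zeta)\le\frac{1}{\varepsilon_z}\|F\|_{A^p_\rho}^p.
\end{equation*}
Combining the two displays yields $|F(z)|^p\le(\omega_{2n}\varepsilon_z)^{-1}\delta_z^{-2n}\|F\|_{A^p_\rho}^p$, and taking $p$-th roots gives (\ref{riesz}).

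Finally, linearity of the point-evaluation $F\mapsto F(z)$ is immediate, and for each fixed $z$ the constant $(\omega_{2n}\varepsilon_z)^{-1/p}(\delta_z)^{-2n/p}$ is finite, so this functional is bounded on $A^p_\rho$; in particular, for $p=2$ this is precisely what guarantees the existence of a reproducing kernel. The only step that needs genuine care is the subharmonicity (equivalently the sub-mean value inequality) of $|F|^p$ when $0<p<1$, where $|F|^p$ fails to be smooth at the zeros of $F$; this is resolved by the plurisubharmonicity of $\log|F|$ together with composition by an increasing convex function, or by a routine approximation argument. The remaining ingredients — the inclusion $\overline{B_{2n}(z,r)}\subset\Omega$, the compactness argument producing a positive minimum of $\rho$, and the elementary estimates — are straightforward.
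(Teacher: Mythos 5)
Your proof is correct and follows essentially the same route as the paper: the sub-mean value inequality for $|F|^p$ over the ball $B_{2n}(z,\delta_z)$, followed by the lower bound $\rho\ge\varepsilon_z$ on that ball and enlargement of the domain of integration to $\Omega$. You additionally justify the subharmonicity of $|F|^p$ for $0<p<1$, which the paper takes for granted.
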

\begin{proof}
For fixed point $z\in \Omega$, the bounded closed ball $\overline{B_{2n}(z,\delta_z)}$ lies in $ \Omega$. Since $\rho(\zeta)$ is a positive continuous function on $\Omega$, then for any $\zeta\in \overline{B_{2n}(z,\delta_z)}$, we have $\varepsilon_z=\min\rho(\zeta)>0$. Therefore,
\begin{eqnarray*}
|F(z)|^p&\leq&\frac{1}{\omega_{2n}\delta_z^{2n}}\int_{B_{2n}(z,\delta_z)} |F(\zeta)|^pdA(\zeta)\\
&\leq&\frac{1}{\omega_{2n}\delta_z^{2n}}\int_{B_{2n}(z,\delta_z)} \frac{|F(\zeta)|^p}{\rho(\zeta)}dA_{\rho}(\zeta)\\
&\leq&\frac{1}{\varepsilon_z\omega_{2n}\delta_z^{2n}}\int_{B_{2n}(z,\delta_z)}|F(\zeta)|^p dA_{\rho}(\zeta)\\
&\leq&\frac{1}{\varepsilon_z\omega_{2n}\delta_z^{2n}}\int_{\Omega}|F(\zeta)|^p dA_{\rho}(\zeta)\\
&=&\frac{1}{\varepsilon_z \omega_{2n}\delta_z^{2n}}\|F\|^p_{A_{\rho}^p},
\end{eqnarray*}
which is the stated result.
\end{proof}
As a consequence of the above Lemma, we conclude that the weighted Bergman space $A^p_{\rho}$
is a Banach space when $1\le p <\infty$ and a complete metric space when $0<p<1$.
\begin{lemma}
Suppose that $\rho(z)$ is a positive continuous function on $\Omega$ that takes the value $0$ for $z\not\in \Omega $. For $0<p<\infty$, the weighted Bergman space
$A^p_\rho$ is closed in  $L^p_\rho $.
\end{lemma}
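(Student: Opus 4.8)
The plan is to use Lemma 1 to upgrade $L^p_\rho$-convergence to locally uniform convergence, and then invoke the Weierstrass convergence theorem for holomorphic functions of several complex variables. Concretely, suppose $\{F_n\}\subset A^p_\rho$ and $F_n\to F$ in $L^p_\rho$, with $F\in L^p_\rho$; I must show that $F$ agrees almost everywhere with a function holomorphic on $\Omega$, hence defines an element of $A^p_\rho$.

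First I would fix a compact set $E\subset\Omega$ and observe that the quantities $\delta_z$ and $\varepsilon_z$ occurring in Lemma 1 are bounded below by positive constants, uniformly for $z\in E$: the former because $\mathrm{dist}(E,\partial\Omega)>0$, the latter because $\rho$ is continuous and strictly positive on the compact set of points at distance at most $\sup_{z\in E}\delta_z$ from $E$, which still lies inside $\Omega$. Hence there is a constant $C_E$ with $\sup_{z\in E}|G(z)|\le C_E\|G\|_{A^p_\rho}$ for every $G\in A^p_\rho$. Applying this to $G=F_n-F_m$ shows that $\{F_n\}$ is uniformly Cauchy on $E$; since $E$ was an arbitrary compact subset of $\Omega$, the sequence $\{F_n\}$ converges locally uniformly on $\Omega$ to some limit function $\Phi$, and by the Weierstrass theorem (a locally uniform limit of holomorphic functions is holomorphic) $\Phi$ is holomorphic on $\Omega$.

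It then remains to identify $\Phi$ with $F$ as an element of $L^p_\rho$. Since $F_n\to F$ in $L^p_\rho$, some subsequence $F_{n_k}$ converges to $F$ pointwise $dA_\rho$-almost everywhere, hence $dA$-almost everywhere because $\rho>0$ on $\Omega$; on the other hand $F_{n_k}(z)\to\Phi(z)$ for every $z\in\Omega$. Therefore $F=\Phi$ almost everywhere, so $F$ has a holomorphic representative, and $\|\Phi\|_{L^p_\rho}=\|F\|_{L^p_\rho}<\infty$ gives $\Phi\in A^p_\rho$; thus $F\in A^p_\rho$ and $A^p_\rho$ is closed in $L^p_\rho$. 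When $0<p<1$ the same argument applies verbatim, using that $d(F,G)=\|F-G\|_{L^p_\rho}^p$ is a complete metric and that the inequality of Lemma 1 is unaffected.

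The main obstacle — more a point requiring care than a genuine difficulty — is the first step: extracting from the single scalar estimate of Lemma 1 a bound that is uniform over compact subsets of $\Omega$, since this is precisely what legitimizes the passage from norm convergence to locally uniform convergence. Everything after that is the standard Weierstrass-plus-subsequence argument.
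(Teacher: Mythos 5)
Your proposal is correct and follows essentially the same route as the paper: use Lemma 1 to convert the $L^p_\rho$-Cauchy property into uniform convergence on compact subsets, conclude holomorphy of the limit by Weierstrass, and identify it with $F$. You in fact supply two details the paper leaves implicit (the uniformity of the constants $\delta_z,\varepsilon_z$ over compact sets, and the almost-everywhere identification of the locally uniform limit with the $L^p_\rho$ limit via a subsequence), which only strengthens the argument.
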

\begin{proof} Let $\{F_n\}$ be a sequence in $A^p_\rho$ and assume $F_n\rightarrow F$
in $L^p_\rho$. In particular, $\{F_n\}$ is a Cauchy sequence in $L^p_\rho$. Applying the previous Lemma, we see that $\{F_n\}$ converges uniformly on every compact subset of $\Omega $. Combining with the assumption that $F_n\rightarrow F$
in $L^p_\rho$, we conclude that $F_n\rightarrow F$
 uniformly on every compact subset of $\Omega $. Therefore,  $F$ is analytic in $\Omega$ and belongs to $A^p_\rho$.
\end{proof}

Now let $p=2$, $A^2_\rho$ is a Hilbert space with inner product
$$
\langle F,G\rangle_{\rho}=\int_{\Omega}F(z)\overline{G(z)}dA_{\rho}(z)
$$
 for $F,G\in A^2_\rho$.

Since each point evaluation functional $\mathcal T[F]=F(z)$ of $A^2_{\rho}$ is bounded,
  the Riesz representation theorem for Hilbert space guarantees existence of a unique function  $K(\zeta,z)=K_{ z}(\zeta)\in A^2_\rho$
  such that $F(z)=\langle F, K_{z}\rangle_{\rho}$ for every
  $F\in A^2_\rho$.  The function  $K(\zeta,z)$ is known as the reproducing kernel with weight $\rho$, or  the weighted  Bergman kernel function. It has the reproducing property
\begin{equation}
F(z)=\int_{\Omega }F(\zeta)\overline{K_{z}(\zeta)}dA_{\rho}(\zeta)\label{rieszrepresentationtheorem}
\end{equation}
for each function $ F \in A^2_\rho$. Taking  $F(z)=K(z,\zeta)$ for some $\zeta\in \Omega$,
we see that
\begin{equation}
K(z,\zeta)=\int_{\Omega }K(\eta,\zeta)\overline{K(\eta,z)}dA_{\rho}(\zeta)=\overline{K(\zeta,z)}.
\label{rieszrepresentationtheorem1}
\end{equation}
Thus the kernel function has the symmetry property $K(z,\zeta)=\overline{K(\zeta,z)}$, which also shows that $ K(z,\zeta) $ is analytic in $z$ and anti-analytic in $\zeta $.
Another consequence is the formula
\begin{equation}
K(z,z)=\int_{\Omega }|K(z,\zeta)|^2dA_{\rho}(\zeta)=\|K(z,\cdot)\|^2_{L^2_\rho}>0.
\label{rieszrepresentationtheorem2}
\end{equation}
In view of (\ref{rieszrepresentationtheorem2}), applying the Schwarz inequality to (\ref{rieszrepresentationtheorem}), there holds
$$
|F(z)|\leq \sqrt{K(z,z)}\|F\|_{A^2_\rho}.
$$
Then for each point $ \zeta \in \Omega $,
$$
\frac{1}{\sqrt{K(\zeta,\zeta)}}\leq \|F\|_{A^2_\rho}
$$
 for all $F \in A^2(\Omega,\rho)$ with $F(\zeta)=1$. In fact, the lower bound
 is sharp and uniquely attained by the function $F(z)=\frac{K(z,\zeta)}{K(\zeta,\zeta)}
$.

The theory of reproducing kernel Hilbert spaces guarantees that the existence of the reproducing kernel $K(\cdot,\cdot)$ is unique.

Recall that a holomorphic mapping $w=\Phi (z)$ from a domain $ \Omega_1$ to a domain $ \Omega_2$ is said to be biholomorphic if it is one-to-one, onto, and  its holomorphic
inverse exists.

 In fact, the kernel function with weight is biholomorphic invariant in the sense of the following Lemma.

\begin{lemma} Suppose that  $w=\Phi (z) $  is a biholomorphic mapping  of a domain $ \Omega_1$ onto a domain $ \Omega_2 $,   $\rho_1(z)$ and $\rho_2(w)$ are  two  positive continuous functions  on domains $ \Omega_1 $ and $\Omega_2 $  respectively,   $\rho_1 (z)=0$ for $z\not\in \Omega_1 $ and  $\rho_2(w)=0$ for $ w\not\in \Omega_2 $, $K_{\rho_1}(z,\zeta)$ and $K_{\rho_2}(w,\varsigma)$ are  reproducing kernels of two weighted Bergman spaces $A^2_{\rho_1}$ and $A^2_{\rho_2}$ respectively. If $\rho_1(z)=\rho_2(\Phi(z))$
for all $ z\in \Omega_1$, then
 \begin{eqnarray}
 K_{\rho_1}(z,\zeta)
 =(D\Phi)(z)K_{\rho_2}(\Phi(z),\Phi(\zeta))\overline{(D\Phi)(\zeta)},
 \label{riesz2}
\end{eqnarray}
where $(D\Phi)(z) $ is the determinant of the holomorphic Jacobian matrix of $w=\Phi (z) $.
\end{lemma}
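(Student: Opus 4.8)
The plan is to realise $A^2_{\rho_1}$ and $A^2_{\rho_2}$ as unitarily equivalent Hilbert spaces through an explicit weighted composition operator, and then to verify that the right-hand side of (\ref{riesz2}) enjoys the reproducing property in $A^2_{\rho_1}$; uniqueness of the reproducing kernel then forces the claimed identity. Define $U\colon A^2_{\rho_2}\to A^2_{\rho_1}$ by $(Ug)(z)=(D\Phi)(z)\,g(\Phi(z))$. Since $\Phi$ is holomorphic on $\Omega_1$ and $g$ is holomorphic on $\Omega_2$, the function $Ug$ is holomorphic on $\Omega_1$; and since $\Phi$ is biholomorphic, differentiating $\Phi^{-1}\circ\Phi=\mathrm{id}$ and taking determinants gives $(D\Phi)(z)\,(D\Phi^{-1})(\Phi(z))=1$, so $D\Phi$ never vanishes on $\Omega_1$ and $U$ admits the natural two-sided inverse $(U^{-1}h)(w)=(D\Phi^{-1})(w)\,h(\Phi^{-1}(w))$.

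First I would check that $U$ is an isometry, hence unitary. For $g\in A^2_{\rho_2}$,
\begin{equation*}
\|Ug\|_{A^2_{\rho_1}}^2=\int_{\Omega_1}|(D\Phi)(z)|^2\,|g(\Phi(z))|^2\,\rho_1(z)\,dA(z)=\int_{\Omega_2}|g(w)|^2\,\rho_2(w)\,dA(w)=\|g\|_{A^2_{\rho_2}}^2 ,
\end{equation*}
where the middle equality is the substitution $w=\Phi(z)$ together with the hypothesis $\rho_1(z)=\rho_2(\Phi(z))$. The crucial analytic fact is that the real Jacobian of the $\mathbb{R}^{2n}$-diffeomorphism underlying $\Phi$ equals $|(D\Phi)(z)|^2$, so that the push-forward of $|(D\Phi)|^2\,dA$ under $\Phi$ is exactly $dA$ on $\Omega_2$. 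The same computation applied to $U^{-1}$ shows $U$ is onto; thus $U$ is unitary.

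Next, put $L(z,\zeta)=(D\Phi)(z)\,K_{\rho_2}(\Phi(z),\Phi(\zeta))\,\overline{(D\Phi)(\zeta)}$. For fixed $\zeta$ we have $L(\cdot,\zeta)=\overline{(D\Phi)(\zeta)}\,U\!\left(K_{\rho_2}(\cdot,\Phi(\zeta))\right)\in A^2_{\rho_1}$, since $K_{\rho_2}(\cdot,\Phi(\zeta))\in A^2_{\rho_2}$. To verify the reproducing property, take $F\in A^2_{\rho_1}$ and write $F=Uh$ with $h=U^{-1}F\in A^2_{\rho_2}$, i.e. $F(\zeta)=(D\Phi)(\zeta)\,h(\Phi(\zeta))$. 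Then, using $\overline{L(\zeta,z)}=(D\Phi)(z)\,\overline{(D\Phi)(\zeta)}\,\overline{K_{\rho_2}(\Phi(\zeta),\Phi(z))}$ and the same change of variables $w=\Phi(\zeta)$,
\begin{align*}
\int_{\Omega_1}F(\zeta)\,\overline{L(\zeta,z)}\,dA_{\rho_1}(\zeta)
&=(D\Phi)(z)\int_{\Omega_1}|(D\Phi)(\zeta)|^2\,h(\Phi(\zeta))\,\overline{K_{\rho_2}(\Phi(\zeta),\Phi(z))}\,\rho_1(\zeta)\,dA(\zeta)\\
&=(D\Phi)(z)\int_{\Omega_2}h(w)\,\overline{K_{\rho_2}(w,\Phi(z))}\,\rho_2(w)\,dA(w)\\
&=(D\Phi)(z)\,h(\Phi(z))=F(z),
\end{align*}
the last equality being the reproducing property of $K_{\rho_2}$ in $A^2_{\rho_2}$ at the point $\Phi(z)$; all integrals converge absolutely by Cauchy--Schwarz since $h$ and $K_{\rho_2}(\cdot,\Phi(z))$ lie in $A^2_{\rho_2}$. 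Hence $L(\cdot,z)$ is the reproducing kernel of $A^2_{\rho_1}$, and by its uniqueness $L(z,\zeta)=K_{\rho_1}(z,\zeta)$, which is exactly (\ref{riesz2}).

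The only step I expect to require genuine care is the change-of-variables identity: one must establish (from the block form of the real derivative of $\Phi$, or directly via the Cauchy--Riemann equations) that the real Jacobian determinant of $\Phi$ is $|(D\Phi)|^2$, and one must invoke biholomorphy to know $D\Phi$ is zero-free, so that $U^{-1}$ is well defined and $L(\cdot,\zeta)$ genuinely lies in $A^2_{\rho_1}$. Everything else — tracking the holomorphic versus anti-holomorphic dependence and the conjugations, and the absolute convergence of the integrals — is routine bookkeeping.
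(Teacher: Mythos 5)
Your proof is correct and follows essentially the same route as the paper's: verify that the candidate kernel reproduces every $F\in A^2_{\rho_1}$ via the change of variables $w=\Phi(\zeta)$ (using that the real Jacobian is $|(D\Phi)|^2$) together with the reproducing property of $K_{\rho_2}$, then invoke uniqueness of the reproducing kernel. Your unitary-operator packaging additionally makes explicit that $L(\cdot,\zeta)\in A^2_{\rho_1}$, a membership the paper's argument leaves implicit, but this is a refinement rather than a different method.
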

\begin{proof}
Let $F\in A^2_{\rho_1}$, after a change of variables $\varsigma=\Phi (\zeta)$
in the following integral,
\begin{eqnarray*}
& &\int_{\Omega_1}(D\Phi)(z)K_{\rho_2}(\Phi(z),\Phi(\zeta))\overline{(D\Phi)(\zeta)}
F(\zeta)\rho_1(\zeta)dA(\zeta)\\
&=&
\int_{\Omega_2}(D\Phi)(z)K_{\rho_2}(\Phi(z), \varsigma)\overline{(D\Phi)(\Phi^{-1}(\varsigma))}
F(\Phi^{-1}(\varsigma))\rho_2(\varsigma)(D_R\Phi^{-1})(\varsigma)dA(\varsigma)
,
\end{eqnarray*}
where $D_R\Phi^{-1}$ is the determinant of the real Jacobian matrix of $\Phi^{-1}$. Based on the relationship between the determinant of the real Jacobian matrix and that of the holomorphic Jacobian matrix, i.e., $D_R\Phi^{-1}=|D\Phi^{-1}|^2$ (see \cite{SGK2} Proposition 1.4.10), the above formula simplifies to
\begin{equation*}
\int_{\Omega_2}(D\Phi)(z)K_{\rho_2}(\Phi(z), \varsigma)
\left \{ \left ((D\Phi)(\Phi^{-1}(\varsigma))\right )^{-1}
F(\Phi^{-1}(\varsigma))\right \}\rho_2(\varsigma)dA(\varsigma)
.
\end{equation*}
On the other hand, by hypothesis, the expression in braces is an element of $A^2_{\rho_2}$.
So the last line equals
\begin{eqnarray*}
(D\Phi)(z) \left ((D\Phi)(\Phi^{-1}(\Phi(z)))\right )^{-1}
F(\Phi^{-1}(\Phi(z)))=F(z).
\end{eqnarray*}
By the uniqueness of the reproducing kernel of the weighted Bergman space $A^2_{\rho_1}$, we see that (\ref{riesz2}) holds. This completes the proof of Lemma 3.
\end{proof}

\section{Main results}

In order to obtain the explicit reproducing kernel of the weighted Bergman kernel corresponding to a specific weights $\rho $ in a concrete domain, we suppose that $\rho (z) $ is  a positive continuous function  on a tube domain
$$
\Omega=T_B=\{z=x+iy: y\in B\}
$$
over an open connected subset $B$ of the real  $n$-dimensional Euclidean space $\mathbb R^n$. In addition, we assume that $ \rho (x+iy)=\rho (iy)$ for all $ x\in \mathbb{R}^n, y\in B$ and
 $\rho (z)=0$ for $z\not\in T_B $.
In this case,
the computation of the weighted  Bergman kernels on those tube domain is greatly benefited from the homogeneity of $T_B$ in the real directions.
An important tool is
the Laplace transforms $F=\mathscr L(f)$ of a function $f$,  that is,
\begin{equation}
F=(\mathscr Lf)(z)=\int_{\mathbb R^n}f(t)e^{2\pi iz\cdot t}dt, \label{laplacetransform}
\end{equation}
where $z\cdot t =x\cdot t+iy\cdot t=\sum\limits_{k=1}^nz_k\cdot t_k $, and $x\cdot t, y\cdot t$ are the Euclidean scalar products for $x,y, t\in \mathbb{R}^n$. The definition will be further justified together with the specific spaces that the test $f$ belongs to. It is obvious that $F$ is well defined only when $f$ decays sufficiently fast at $\infty$.

 Let
\begin{equation}
 I(t)=\int_B\rho(iy)e^{-4\pi y\cdot t}dy,\label{itdefinition}
\end{equation}
then the set $ U_I=\{t: I(t)<\infty \}$ is a convex set, and $ \log I(t) $  is a convex function on $U_I$. The weighted $L^p_I $ space is the set of the measurable function defined on $\mathbb R^n$ such that
 $$
 \|f\|_{L^p_I}=\left(\int_{\mathbb R^n}|f(t)|^pI(t)dt\right)^{\frac1p}<\infty.
 $$
 Notice that if $f\in L^p_I$, then $ f(t)=0$ almost everywhere for all $ t\not\in U_I$, so we can assume that the support of $f$ is contained in the closure of $U_I$.  We also see that $A^2_\rho$ and $L^2_I$ are Hilbert spaces with the inner product
  $$
  \langle F,G\rangle_{\rho}=\int_{B}\int_{\mathbb R^n}F(x+iy)\overline{G(x+iy)}\rho(iy)dxdy
  $$
   for $F,G\in A^2_\rho$ and
   $$
   \langle f,g\rangle_I=\int_{\mathbb R^n}f(t)\overline{g(t)}I(t)dt
   $$
    for $f,g\in L^2_I$ respectively.

The main result herein is established as follows.
\begin{theorem}
The weighted Bergman kernel $K(z,w)$ of $A^2_\rho$ is given by
\begin{equation}
K(z,w)=\int_{\mathbb R^n}e^{2\pi it(z-\overline w)}I^{-1}(t)dt,\label{kernelform}
\end{equation}
where $\rho (z) $ is  a positive continuous function  on the tube domain
$T_B$ and satisfies that  $ \rho (x+iy)=\rho (iy)$ for all $ x\in \mathbb{R}^n, y\in B$.
\end{theorem}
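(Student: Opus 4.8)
\noindent The plan is to identify $A^2_\rho$ with $L^2_I$ through the Laplace transform $\mathscr L$, to check that $\mathscr L\colon L^2_I\to A^2_\rho$ is a \emph{surjective isometry} — a Paley--Wiener theorem adapted to the tube $T_B$ — and then to transport the reproducing kernel across this unitary identification, whereupon (\ref{kernelform}) drops out once one knows which element of $L^2_I$ represents point evaluation. For the isometry I would first record the elementary lower bound for $I$: fixing $y\in B$ and a small ball $\overline{B_n(y',r)}\subset B$ whose center is displaced against the direction of $t$, restriction of (\ref{itdefinition}) to that ball gives $I(t)\ge c(y,r)\,e^{-4\pi y\cdot t}e^{2\pi r|t|}$, hence $e^{-2\pi y\cdot t}\le c(y,r)^{-1/2}I(t)^{1/2}e^{-\pi r|t|}$. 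By Cauchy--Schwarz this makes $t\mapsto f(t)e^{-2\pi y\cdot t}$ (and $t\mapsto|t|\,f(t)e^{-2\pi y\cdot t}$) lie in $L^1(dt)\cap L^2(dt)$ for every $f\in L^2_I$, with bounds locally uniform in $y$, so $F=\mathscr Lf$ is well defined and holomorphic on $T_B$, and for fixed $y$ the function $x\mapsto F(x+iy)$ is the inverse Fourier transform of $t\mapsto f(t)e^{-2\pi y\cdot t}$. Plancherel's theorem then gives $\int_{\mathbb R^n}|F(x+iy)|^2dx=\int_{\mathbb R^n}|f(t)|^2e^{-4\pi y\cdot t}dt$, and multiplying by $\rho(iy)$, integrating over $B$, and invoking Tonelli yields $\|F\|_{A^2_\rho}^2=\int_{\mathbb R^n}|f(t)|^2I(t)\,dt=\|f\|_{L^2_I}^2$.

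For surjectivity — the part I expect to be the main obstacle — take $F\in A^2_\rho$. A slicewise subharmonic mean-value estimate, of the type used in the proof of Lemma 1, shows $F(\cdot+iy)\in L^2(\mathbb R^n,dx)$ for \emph{every} $y\in B$ with norm locally bounded in $y$, so one may set $f_y(t)=e^{2\pi y\cdot t}\,\widehat{F(\cdot+iy)}(t)$. The crux is that $f_y$ is independent of $y$: after the usual regularizations (interior estimates, and if needed mollification in $x$, to license differentiation under the integral sign and integration by parts), one differentiates in $y_j$, uses the Cauchy--Riemann relations $\partial_{y_j}F=i\,\partial_{z_j}F=i\,\partial_{x_j}F$, integrates by parts in $x_j$, and finds that the two resulting terms cancel, so $\partial_{y_j}f_y\equiv0$ for each $j$; since $B$ is connected, $f_y$ equals a single function $f$. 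Running the Plancherel/Tonelli computation in reverse shows $f\in L^2_I$ and $F=\mathscr Lf$, so $\mathscr L$ is a unitary isomorphism of $L^2_I$ onto $A^2_\rho$. The reason this step is delicate is precisely that $B$ is an \emph{arbitrary} open connected set rather than a cone or a convex body, so no off-the-shelf Paley--Wiener theorem applies and the contour-shift argument must be carried out locally in $y$ before connectedness is used.

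Finally I would transport the kernel. For $w\in T_B$ the evaluation functional $F\mapsto F(w)$ on $A^2_\rho$ is bounded by Lemma 1, hence $f\mapsto(\mathscr Lf)(w)=\int_{\mathbb R^n}f(t)e^{2\pi iw\cdot t}dt$ is a bounded functional on $L^2_I$; testing the identity $\int f\,\overline{g_w}\,I\,dt=\int f(t)e^{2\pi iw\cdot t}dt$ against indicator functions of relatively compact subsets of $U_I$ forces its Riesz representer to be $g_w(t)=e^{-2\pi i\bar w\cdot t}I^{-1}(t)$, which therefore lies in $L^2_I$ automatically — equivalently, the integral in (\ref{kernelform}) converges for all $z,w\in T_B$. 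Since $\mathscr L$ is unitary, $\langle F,\mathscr Lg_w\rangle_\rho=\langle\mathscr L^{-1}F,g_w\rangle_I=F(w)$ for every $F\in A^2_\rho$, so by uniqueness of the reproducing kernel $K(\cdot,w)=\mathscr Lg_w$, that is,
\begin{equation*}
K(z,w)=(\mathscr Lg_w)(z)=\int_{\mathbb R^n}e^{-2\pi i\bar w\cdot t}I^{-1}(t)\,e^{2\pi iz\cdot t}\,dt=\int_{\mathbb R^n}e^{2\pi it(z-\bar w)}I^{-1}(t)\,dt,
\end{equation*}
which is (\ref{kernelform}).
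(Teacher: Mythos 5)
Your proposal is correct and follows essentially the same route as the paper: establish that the Laplace transform is a surjective isometry $L^2_I\to A^2_\rho$ (the paper's Lemma~4, proved there by the same regularize-and-shift-the-contour argument you sketch), then transport the point-evaluation functional through this unitary and read off its Riesz representer $e^{-2\pi i\bar w\cdot t}I^{-1}(t)$ in $L^2_I$. Your explicit lower bound $I(t)\gtrsim e^{-4\pi y\cdot t}e^{cr|t|}$ is a welcome tightening of an integrability point the paper only addresses later (via the $\sinh$ product when verifying $K_{z_0}\in A^2_\rho$), but it does not change the structure of the argument.
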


To prove Theorem 1, we need the following lemma, which is also an important result by itself.
\begin{lemma}
The Laplace transform $\mathscr L$ is an isometry from $L^2_I$ to $A^2_\rho$ preserving the Hilbert space norms, i.e.,
$$
\|\mathscr Lf\|_{A_{\rho}^2}=\|f\|_{L_I^2}.
$$
\end{lemma}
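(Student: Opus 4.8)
The plan is to read the identity $\|\mathscr L f\|_{A^2_\rho}=\|f\|_{L^2_I}$ as Plancherel's theorem applied in the real ($x$) variables on each horizontal slice $\{\operatorname{Im}z=y\}$, followed by Tonelli's theorem in the imaginary ($y$) variables; the only point requiring care is to make $\mathscr L f$ a well-defined holomorphic function for an arbitrary $f\in L^2_I$. I would first prove the identity on a dense subclass. Call $f$ \emph{admissible} if it is bounded with compact support. Since every $f\in L^2_I$ vanishes a.e. outside $U_I$, the truncations $f_R=f\cdot\mathbf 1_{\{|t|\le R,\ |f(t)|\le R\}}$ are admissible, have $\operatorname{supp}f_R\subset\overline{U_I}$, and converge to $f$ in $L^2_I$ by dominated convergence (dominated by $|f|^2I\in L^1$), so the admissible functions are dense. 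For admissible $f$ and $y\in B$ the estimate $\int_{\mathbb R^n}|f(t)|e^{-2\pi y\cdot t}\,dt\le\|f\|_\infty\int_{\operatorname{supp}f}e^{-2\pi y\cdot t}\,dt<\infty$ shows $F=\mathscr L f$ is given by an absolutely convergent integral, and Morera's theorem (the needed dominated convergence being supplied by the compact support of $f$) shows $F$ is holomorphic on $T_B$. Fixing $y\in B$ and using the normalization $\widehat g(\xi)=\int g(t)e^{-2\pi i\xi\cdot t}\,dt$, one has $F(x+iy)=\widehat{h_y}(-x)$ with $h_y(t)=f(t)e^{-2\pi y\cdot t}\in L^1\cap L^2(\mathbb R^n)$, so Plancherel gives $\int_{\mathbb R^n}|F(x+iy)|^2\,dx=\int_{\mathbb R^n}|f(t)|^2e^{-4\pi y\cdot t}\,dt$. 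Multiplying by $\rho(iy)$, integrating over $y\in B$, and interchanging the two integrations by Tonelli's theorem (the integrand being non-negative) yields
\begin{equation*}
\|\mathscr L f\|_{A^2_\rho}^2=\int_{\mathbb R^n}|f(t)|^2\left(\int_B\rho(iy)e^{-4\pi y\cdot t}\,dy\right)dt=\int_{\mathbb R^n}|f(t)|^2I(t)\,dt=\|f\|_{L^2_I}^2
\end{equation*}
by the definition (\ref{itdefinition}) of $I$; in particular $\mathscr L f\in A^2_\rho$.

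The second stage is the passage to the limit. By the first stage $\mathscr L$ is a linear isometry from the dense subspace of admissible functions into $A^2_\rho$, and since $A^2_\rho$ is complete (Lemma 2) it extends uniquely to an isometry $L^2_I\to A^2_\rho$. To see that the extension is still given by the integral (\ref{laplacetransform}), fix $f\in L^2_I$: the truncations satisfy $\|\mathscr L f_R-\mathscr L f_{R'}\|_{A^2_\rho}=\|f_R-f_{R'}\|_{L^2_I}\to0$, so $\{\mathscr L f_R\}$ is Cauchy in $A^2_\rho$; applying the pointwise bound of Lemma 1 to the differences $\mathscr L f_R-\mathscr L f_{R'}$ shows that $\{\mathscr L f_R\}$ converges locally uniformly on $T_B$ to a holomorphic function, which is its $A^2_\rho$-limit, i.e. $\mathscr L f$. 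One then identifies this locally uniform limit with $\int_{\mathbb R^n}f(t)e^{2\pi iz\cdot t}\,dt$ at every $z\in T_B$ where the latter integral converges, and adopts it as the definition of $\mathscr L f$ at the remaining points.

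The main obstacle is precisely this last identification: for a general $f\in L^2_I$ the defining integral of $\mathscr L f$ need not be absolutely convergent, so one cannot differentiate under the integral directly, and holomorphy together with well-definedness of $\mathscr L f$ must be extracted from the $A^2_\rho$-Cauchy property of the truncations via the interior estimate of Lemma 1, which converts $A^2_\rho$-convergence into locally uniform convergence and hence preserves holomorphy and makes the limit independent of the truncation scheme. Everything else — Plancherel, Tonelli, and density by truncation — is routine. I note that the same computation, run with $g_w(t)=e^{-2\pi i\overline w\cdot t}/I(t)$ in place of $f$, is what will give the kernel formula (\ref{kernelform}) of Theorem 1 once one knows $g_w\in L^2_I$, i.e. $\int_{\mathbb R^n}e^{-4\pi(\operatorname{Im}w)\cdot t}I^{-1}(t)\,dt<\infty$; but that finiteness belongs to the proof of Theorem 1, not to the present lemma.
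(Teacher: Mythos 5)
Your argument correctly establishes the part of the lemma that the paper dispatches in its final short paragraph: that $\mathscr L$ is well defined on all of $L^2_I$, maps into $A^2_\rho$, and satisfies $\|\mathscr Lf\|_{A^2_\rho}=\|f\|_{L^2_I}$, by Plancherel on each slice $\{\operatorname{Im}z=y\}$ followed by Tonelli in $y$. On that point you are in fact more careful than the paper, which applies Plancherel to $F_y$ directly without discussing in what sense $\mathscr Lf$ is defined when $\int_{\mathbb R^n}|f(t)|e^{-2\pi y\cdot t}\,dt$ might diverge; your truncation argument combined with the interior estimate of Lemma 1 handles this cleanly.

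The genuine gap is that you have only produced an isometric embedding, not an isometry of $L^2_I$ \emph{onto} $A^2_\rho$. The bulk of the paper's proof --- everything before its final paragraph --- is devoted to surjectivity: given an arbitrary $F\in A^2_\rho$, it constructs $f\in L^2_I$ with $F=\mathscr Lf$ by a Paley--Wiener-type argument (the regularizing factors $l_\varepsilon(z)=(1+\varepsilon(z_1^2+\cdots+z_n^2))^N$, the convexity of $y\mapsto\log\int|F_\varepsilon(x+iy)|\,dx$, the contour shift showing that $\check F_y(t)e^{-2\pi y\cdot t}$ is independent of $y\in B$, and the decomposition of $\mathbb R^n$ into cones to get $g(t)e^{2\pi y\cdot t}\in L^1$). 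This is not a cosmetic addition: the proof of Theorem 1 opens with precisely this claim (``for $F\in A^2_\rho$ there exists $f\in L^2_I$ such that $F=\mathscr Lf$''), and without it the identity $F(z_0)=\langle f,f_{z_0}\rangle_I$ only reproduces elements of the closed range of $\mathscr L$ inside $A^2_\rho$, not every element of $A^2_\rho$. So your proposal proves a strictly weaker statement than the one the paper needs from this lemma; the missing surjectivity is the hard part, and it requires an argument of a different nature (recovering $f$ from $F$) rather than a further limiting step in the direction you have set up.
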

\begin{proof} First, we prove that if  $F(z)\in A^2_\rho$, there exists $f\in L^2_I$ such that $F(z)=(\mathscr Lf)(z)$, which means that the Laplace transform $\mathscr L$ is surjective.

       Let $B_{0}\subseteq B$ be a bounded connected open set, so there exists a positive constant $R_0$ such that $B_0\subseteq  D(0,R_0)$. Assume that $l_{\varepsilon}(z)=(1+\varepsilon(z_1^2+\cdots+z_n^2))^N$, where $N$ is a integer and $N>\frac n4$. Then for $\varepsilon\leq\frac{1}{2R_0^2}$ and $z=x+iy $ with $|y|\leq R_0$,
\begin{eqnarray*}
|l_{\varepsilon}(z)|&=&|((1+\varepsilon(z_1^2+\cdots+z_n^2))^2)^{\frac N2}| \\
&=&\left(\left(1+\varepsilon(|x|^2-|y|^2)\right)^2+4\varepsilon^2
\left(x\cdot y\right)^2\right)^{\frac N2} \\
&\geq & \left(1+\varepsilon(|x|^2-|y|^2)\right)^N\geq\left(\frac12+\varepsilon|x|^2\right)^N,
\end{eqnarray*}
i.e., $\left|l^{-1}_{\varepsilon}(z)\right|\leq\frac{1}{\left(\frac12+\varepsilon|x|^2\right)^N}$.
Set $F_{\varepsilon}(z)=F(z)l_{\varepsilon}^{-1}(z)$, then base on H\"{o}lder's inequality,
\begin{equation*}
\int_{\mathbb R^n}\left|F_{\varepsilon,y}(x)\right|dx\leq\left(\int_{\mathbb R^n}\left|F_{y}(x)\right|^2dx\right)^{\frac12}\left(\int_{\mathbb R^n}\left|l_{\varepsilon}^{-1}(x+iy)\right|^2dx\right)^{\frac12}<\infty,
\end{equation*}
which implies that $F_{\varepsilon,y}(x)=F_{\varepsilon}(x+iy)\in L^1(\mathbb R^n) $ and
\begin{equation*}
\log \int_{\mathbb R^n}\left|F_{\varepsilon}(x+iy)\right|dx
\end{equation*}
is a convex function on $B_0$. Therefore, for any compact $K\subseteq B_0$, we have
\begin{equation}
\sup \left\{\int_{\mathbb R^n}\left|F_{\varepsilon}(x+iy)\right|dx:y\in K \right\} <\infty.\label{laplacetransform1}
\end{equation}

For any $a,b,y\in B_0, t \in \mathbb{R}^n$, let
 \begin{eqnarray*}
 G_\varepsilon(z)&=&F_{\varepsilon}(z)e^{2\pi iz\cdot t};\\
 J(x',x_n,\tau )&=&F_\varepsilon(x+i(a+\tau(b-a))),\ \ \ x=(x',x_n), 0\leq \tau \leq 1; \\
 I_{\varepsilon}(y,t)&=& \check F_{\varepsilon,y}(t)e^{-2\pi y\cdot t}  .
 \end{eqnarray*}
In order to show that, for all $a,b \in B_0$,  $I_\varepsilon(a,t)=I_\varepsilon(b,t)$ and $I(a,t)=I(b,t)$ almost everywhere for all  $t\in \mathbb{R}^n$, we first assume that $a=(a',a_n)$, $b=(a',b_n)$ and the closed interval
 $[a,b]=\{a+t(b-a):0\leq t\leq 1\} $ is contained in $B_0$. Then (\ref{laplacetransform1})
  implies that the integral
\begin{equation*}
\int_0^{\infty}\int_0^1\int_{\mathbb R^{n-1}}(|J(x',x_n,\tau )| +|J(x',-x_n,\tau )|)dx'd\tau dx_n
\end{equation*}
is finite. This means
\begin{equation*}
\varliminf_{R\to\infty}\int_0^1\int_{\mathbb R^{n-1}}(|J(x',R,\tau )| +|J(x',-R,\tau )|)dx'd\tau=0.
\end{equation*}
Therefore,
\begin{eqnarray*}
 |I_\varepsilon(a,t)-I_\varepsilon(b,t)|&=&\left|\int_{\mathbb R^n}(G_\varepsilon(x+ib)-G_\varepsilon(x+ia))dx\right|\\
&=&\left|\int_{\mathbb R^n}\int_0^1\frac{\partial}{\partial \tau}G_\varepsilon(x+i(a+\tau (b-a) )d\tau dx\right|\\
&=&\left|\int_{\mathbb R^n}\int_0^1\frac{\partial}{\partial y_n}\left(G_\varepsilon(x+i(a',y_n))\right)\big|_{y_n=a_n+\tau(b_n-a_n)}(b_n-a_n)d\tau dx\right|\\
&=&|b_n-a_n|\left|\int_{\mathbb R^n}\int_0^1 i\frac{\partial}{\partial x_n}\left(G_{\varepsilon}(x+i(a+\tau(b-a)))\right)d\tau dx\right|\\
&\leq& C_1(t)\varliminf_{R\to\infty} \int_0^1\int_{\mathbb R^{n-1}}(|J(x',R,\tau )| +|J(x',-R,\tau )|)dx'd\tau =0,
\end{eqnarray*}
where $C_1(t)=|b_n-a_n|e^{-2\pi|t|(|a|+|b-a|)}$. Remark that $B_0$ is  connected and by an
iteration argument, we can show that $g_{\varepsilon,y}(t)=\check F_{\varepsilon,y}(t)e^{-2\pi y\cdot t}$ is a function independent of $y\in B_0$. Hence $g_{\varepsilon}(t)=g_{\varepsilon,y}(t)$ is independent of $y\in B_0$ and $g_{\varepsilon}(t)e^{2\pi y\cdot t}=\check F_{\varepsilon,y}(t)\in L^1(\mathbb R^n)$
for all $y\in B_0$.

On the other hand, it is obvious that $F_{\varepsilon,y}(x)\to F_{y}(x)$ as $\varepsilon\to0$. Let $Iy(t)= \check F_{y}(t)e^{-2\pi y\cdot t} $, we can also prove that $\check F_{y}(t)e^{-2\pi y\cdot t} $ is independent of $y\in B_0$ and $ g(t)= \check F_{y}(t)e^{-2\pi y\cdot t} $ almost everywhere. Indeed, for $a,b\in B_0$ and any compact subset $K\subset \mathbb R^n$,
let $R_1=\max \{|t|:t\in K\}$. Then Plancherel's theorem implies that
\begin{eqnarray*}
\|I_b-I_a\|_{L^2(K)}
&\leq& \|I_b-g_\varepsilon\|_{L^2(K)}+\|g_\varepsilon-I_a\|_{L^2(K)} \\
&\leq&e^{2\pi R_0 R_1}\left (\|\check F_{a}-\check F_{\varepsilon,a}\|_{L^2(K)}+\|\check F_{\varepsilon,b}-\check F_{b}\|_{L^2(K)}\right )\\
&=&e^{2\pi R_0 R_1}\left (\|F_{a}- F_{\varepsilon,a}\|_{L^2(\mathbb{R}^n)}+\|F_{b}- F_{\varepsilon,b}\|_{L^2(\mathbb{R}^n)}\right )\to0
\end{eqnarray*}
as $\varepsilon\to0$. Hence  $\check F_{a}(t)e^{-2\pi a\cdot t}=\check F_{b}(t)e^{-2\pi b\cdot t}$ almost everywhere on $\mathbb R^n$.

Next, we show that $g(t)e^{2\pi y\cdot t}\in L^1(\mathbb R^n)$. In order to prove this affirmance, decompose $\mathbb R^n$ into the union of finite non-overlapping cones  $\{\Gamma_k\}_{k=1}^{\infty}$ with common vertex at the origin, i.e., $\mathbb R^n=\cup_{k=1}^{N}\Gamma_k$ and let $B_{\delta}=\overline{D(y_0,\delta)}\subset B_0$. Then for any $y\in D(y_0,\frac{\delta}{4})$ and $y_k\in (y_0+\Gamma_k)$ satisfying $\frac{3\delta}{4}\leq|y_k-y_0|<\delta$, we have $(y_k-y)\cdot t\geq\frac{|y_k-y_0|}{\sqrt 2}|t|-|y_0-y||t|\geq(\frac{3}{4\sqrt 2}-\frac14)\delta|t|\geq\frac14\delta|t|$ for $y_k-y_0,t\in\Gamma_k$. Thus, it follows from H\"{o}lder's inequality and Plancherel's theorem that
\begin{eqnarray*}
\int_{\Gamma_k}|g(t)e^{2\pi y\cdot t}|dt&\leq&\int_{\Gamma_k}|\check F_{y_k}(t)e^{-\pi\frac{\delta_0}{4}|t|}|dt\leq\left(\int_{\Gamma_k}|\check F_{y_k}(t)|^2dt\right)^{\frac12}\left(\int_{\Gamma_k}| e^{-2\pi\frac{\delta_0}{4}|t|}|dt\right)^{\frac12}\\
&=&\left(\int_{\Gamma_k}| F_{y_k}(t)|^2dt\right)^{\frac12}\left(\int_{\Gamma_k}| e^{-2\pi\frac{\delta_0}{4}|t|}|dt\right)^{\frac12}<\infty,
\end{eqnarray*}
which shows that $g(t)e^{2\pi y\cdot t}\in L^1(\Gamma_k)$. Then $g(t)e^{2\pi y\cdot t}\in L^1(\mathbb R^n)$. Therefore, we can see that the function $G(z)=\int_{\mathbb R^n}g(t)e^{-2\pi i(x+iy)\cdot t}dt$ is well-defined and holomorphic on the tube domain $T_{B_0} $.

Now we can prove that
$$
 \lim_{\varepsilon\to0}\int_{\mathbb R^n}g_{\varepsilon}(t)e^{-2\pi i(x+iy)\cdot t}dt= \int_{\mathbb R^n}g(t)e^{-2\pi i(x+iy)\cdot t}dt.
 $$
  For $y\in B_{0}$,
\begin{eqnarray*}
&&\left|\int_{\mathbb R^n}(g_{\varepsilon}(t)-g(t))e^{-2\pi i(x+iy)\cdot t}dt\right|\leq\int_{\mathbb R^n}\left|\left(\check F_{\varepsilon,y}(t)e^{-2\pi y\cdot t}-\check F_{y}(t)e^{-2\pi y\cdot t}\right)e^{2\pi iz\cdot t}\right|dt\\
&=&\sum_{k=1}^n\int_{\Gamma_k}\left|\left(\check F_{\varepsilon,y_k}(x)-\check F_{y_k}(x)\right)e^{-2\pi i(y_k-y)\cdot  t}\right|dt\\
&\leq&\sum_{k=1}^n\left(\int_{\Gamma_k}|\check F_{\varepsilon,y_k}(x)-\check F_{y_k}(x)|^2dt\right)^{\frac12} \left(\int_{\Gamma_k}|e^{-2\pi\frac{\delta_0}{4}|t|}|dt\right)^{\frac12}\\
&\leq & \left [\max\limits_{1\leq k\leq n} \left(\int_{\Gamma_k}|e^{-2\pi\frac{\delta_0}{4}|t|}|dt\right)^{\frac12}\right]\sum_{k=1}^n\left(\int_{\Gamma_k}| F_{\varepsilon,y_k}(x)-F_{y_k}(x)|^2dt\right)^{\frac12}\to0
\end{eqnarray*}
as $\varepsilon\to0$. It follows that $\lim\limits_{\varepsilon\to0}F_{\varepsilon}(z)=G(z)$. Combining with $\lim\limits_{\varepsilon\to0}F_{\varepsilon}(z)=F(z)$, we get $G(z)=F(z)$ for $z\in T_{B_0}$. Therefore, there exists a measurable function $g(t)$ such that $g(t)e^{2\pi y\cdot t}\in L^1(\mathbb R^n)$ for all $y\in B_0$, then
$$
F(z)=\int_{\mathbb R^n}g(t)e^{-2\pi iz\cdot t}dt
$$
holds for $z\in T_{B_0}$. Hence $g(t)=\check F_y(t)e^{-2\pi y\cdot t}$ for all $y\in B_0$.
Since $B$ is connected, we can choose a sequence of bounded domains $\{B_k\}$ such that $B_0\subset B_1\subset B_2\subset\ldots$ and  $B=\cup_{k=0}^{\infty}B_{\delta_k}$. Then
$\check F_{y_k}(t)e^{-2\pi y_k\cdot t}=\check F_y(t)e^{-2\pi y\cdot t}$ for $y\in B_0$ and $y_k\in B_k$, where $k>0$. These imply that  $g(t)=\check F(t)e^{-2\pi y\cdot t}$ holds for all $y\in B $.
In other words, $F(z)=\int_{\mathbb R^n}g(t)e^{-2\pi iz\cdot t}dt$ holds for all $z \in T_B$. By letting $f(t)=g(-t)$, we see that  $f\in L^2_I $ and  $F(z)=(\mathscr Lf)(t)$ for any given $F(z)\in A^2_\rho$, which means that the Laplace transform $\mathscr L$ is surjective.

Now we prove that (\ref{laplacetransform}) is well-defined, injective and preserves norm, i.e., $\|\mathscr Lf\|_{A^2_\rho}=\|f\|_{L^2_I}$. Let $F(x+iy)= F_y(x)=(\mathscr L f)(x+iy)$ for every fixed $y\in B$. Based on Plancherel's theorem, there holds
$$
\int_{\mathbb R^n}|F_y(x)|^2dx=\int_{\mathbb R^n}|f(t)|^2e^{-4\pi y\cdot t }dt.
$$
Multiplying by $\rho(iy )$ and performing integral over $B$ on both sides of the above equation, we have
\begin{eqnarray*}
\|F\|_{A^2_\rho}^2&=&\int_B\left(\int_{\mathbb R^n}|F(x+iy)|^2dx\right)\rho(iy)dy\\
&=&\int_B\left(\int_{\mathbb R^n}|e^{-2\pi y\cdot t}f(t)|^2dt\right)\rho(iy)dy\\
&=&\int_{\mathbb R^n}|f(t)|^2I(t)dt
=\|f\|^2_{L^2_I}.
\end{eqnarray*}

It then follows that $ f(t)=0$ almost everywhere for all $ t\not\in U_I$, where $U_I=\{t:I(t)<\infty\}$. Therefore, $f(t)$ is supported in $U_I$.
This completes the proof of Lemma 4.
\end{proof}

Let us now prove Theorem 1.
\begin{proof}
For $F(z)\in A^2_\rho( T_B)$, there exists $f\in L^2_I$ such that $F(z)=(\mathscr Lf)(z)=\int_{\mathbb R^n}f(t)e^{2\pi iz\cdot t}dt$. And for the kernel $K_{z_0}(z)=K(z,z_0)\in A^2_\rho$, there also exists $f_{z_0}\in L^2_I$ such that $K_{z_0}(z)=(\mathscr Lf_{z_0})(z)=\int_{\mathbb R^n}f_{z_0}(t)e^{2\pi iz\cdot t}dt$ for $z_0,z\in T_B$.
On the other hand, Lemma 1 claims that $\mathscr L$ is an isometry from $L^2_I$ to $A^2_\rho$ preserving the Hilbert space norm. Using the polarization identity of $\|F\|_{A^2_\rho}=\|f\|_{L^2_I}$, it then follows that the inner product is also preserved. Hence we have
\begin{equation*}
F(z_0)=\langle F,K_{z_0}\rangle_{\rho}=\langle f,f_{z_0}\rangle_{I}=\int_{\mathbb R^n}f(t)\overline{f_{z_0}(t)}I(t)dt.
\end{equation*}
Hence
\begin{equation*}
\int_{\mathbb R^n}f(t)e^{2\pi iz_0\cdot t}dt=\int_{\mathbb R^n}f(t)\overline{f_{z_0}(t)}I(t)dt
\end{equation*}
holds for every $f\in L^2_I$, which implies that $e^{2\pi iz_0\cdot t}=\overline{f_{z_0}(t)}I(t)$ almost everywhere on $U_I=\{t\in\mathbb R^n:I(t)<\infty\}$. Then, $f_{z_0}(t)=e^{-2\pi i\bar{z_0}\cdot t}I^{-1}(t)$. Here, $I(t)^{-1}$ takes $0$ when $I(t)=\infty$ by the definition of $I(t)$.
Hence,
\begin{equation*}
K_{z_0}(z)=K(z,z_0)=\int_{\mathbb R^n}f_{z_0}(t)e^{2\pi iz\cdot t}dt=\int_{\mathbb R^n}e^{2\pi i(z-\bar{z_0})\cdot t}I^{-1}(t)dt.
\end{equation*}
\end{proof}
Note that the Bergman kernel is uniquely characterized by the following three properties.
(i) $K(z,z_0)=\overline{K(z_0,z)}$ for all $z,z_0\in T_B$;\\
(ii) $K(z,z_0)$ reproduces every element in $A^2_\rho$ in the following sense \begin{equation*}
F(z)=\int_B\int_{\mathbb R^n}K(z,u+iv)F(u+iv)\rho(iv)dudv
\end{equation*}
for every $F\in A^2_\rho$;\\
(iii)$K_{z_0}\in A^2_\rho$ for all $z_0\in T_B$, where $K_{z_0}(z)=K(z,z_0)$.

We shall show that (\ref{kernelform}) admits these properties.
We first prove the symmetric property as follows,
\begin{eqnarray*}
\overline{K(z_0,z)}&=&\overline{\int_{\mathbb R^n}e^{2\pi i(z_0-\bar{z})\cdot t}I^{-1}(t)dt}=\int_{\mathbb R^n}e^{-2\pi i(\bar{z_0}-z)\cdot t}\overline{I^{-1}(t)}dt\\
&=&\int_{\mathbb R^n}e^{2\pi i(z-\bar{z_0})\cdot t}I^{-1}(t)dt=K(z,z_0),
\end{eqnarray*}
which means (i) holds for the Bergman kernel in the form of (\ref{kernelform}). We then show $K(z,z_0)$ reproduces every element in $A^2_\rho$. Indeed, for $F(z),K_{z_0}(z)\in A^2_\rho$,
\begin{equation*}
F(z)=\int_{U_I}f(t)e^{2\pi iz\cdot t}dt
\end{equation*}
and
\begin{equation*}
K_{z_0}(z)=\int_{\mathbb R^n}\left(\frac{e^{-2\pi i\bar{z_0}\cdot t}}{I(t)}\right)e^{2\pi it\cdot z}dt.
\end{equation*}
Then the polarization identity implies that
\begin{eqnarray*}
\langle F,K_{z_0}\rangle_{A^2_{\rho}}&=&\int_{B}\int_{\mathbb R^n}K(z_0,z)F(z)dA_{\rho}(z)\\
&=&\left \langle f(t),\frac{e^{-2\pi i\bar{z_0}\cdot t}}{I(t)}\right \rangle_{L^2_I}\\
&=&\int_{U_I}f(t)\overline{\frac{e^{-2\pi i\bar{z_0}\cdot t}}{I(t)}}I(t)dt\\
&=&\int_{U_I}f(t)e^{2\pi iz_0\cdot t}dt\\
&=&F(z_0).
\end{eqnarray*}
Hence, the second property is proved.

Finally, we prove that $K_{z_0}(z)\in A^2_\rho $. For fixed $z_0=u+iv\in T_B$, there exists $\delta>0$ such that $v+P_{\delta}\subset B$, where $P_{\delta}=[-\delta,\delta]^n\subset\mathbb R^n$. Let $\varepsilon=\min\{\rho(iy):y\in v+P_{\delta}\}>0$, then
\begin{equation*}
I(t)e^{4\pi v\cdot t}=\int_B e^{-4\pi(y-v)\cdot t}\rho(y)dy\geq \varepsilon\int_{P_{\delta}}e^{-4\pi \eta\cdot t}d\eta=\varepsilon\prod_{k=1}^n\frac{\sinh(4\pi\delta t_k)}{2\pi\delta t_k}.
\end{equation*}
Therefore, again by the polarization identity,
\begin{eqnarray*}
\langle K_{z_0},K_{z_0}\rangle_{A^2_{\rho}}&=&\int_{T_B}|K_{z_0}(z)|^2dA_{\rho}(z)=\langle K_{z_0},K_{z_0}\rangle_{A^2_{\rho}}\\
&=&\left \langle \frac{e^{-2\pi i\bar{z_0}\cdot t}}{I(t)},\frac{e^{-2\pi i\bar{z_0}\cdot t}}{I(t)}\right \rangle_{L^2_I}=\int_{\mathbb R^n}\left|\frac{e^{-2\pi i\bar{z_0}\cdot t}}{I(t)}\right|^2I(t)dt\\
&=&\int_{\mathbb R^n}\frac{e^{-4\pi v\cdot t}}{I(t)}dt\leq\int_{\mathbb R^n}\frac{1}{\varepsilon}\left(\prod_{k=1}^n\frac{\sinh(4\pi\delta t_k)}{2\pi\delta t_k}\right)^{-1}dt<\infty.
\end{eqnarray*}
Therefore, $K_{z_0}(z)\in A^2_\rho$ for $z \in T_B$.

\section{Computation of some weighted Bergman kernels}

The weighted Bergman kernel have never been computed explicitly. However, the theory
would be a bit hollow if we do not compute at least one weighted Bergman kernel. In this section, we calculate some weighted Bergman kernels as examples.
The first example is the weighted Bergman kernel for tube over the following cone.

{\bf Example 1.} Suppose that $B=\{y=(y',y_n):y_n>|y'|^2\}$. Denote by $A^2_\alpha$ the space of analytic  functions on tube domain $\Omega=\{z=x+iy:x\in \mathbb{R}^n, y\in B\} $ such that
\begin{equation}
\|F\|_{A^2_{\alpha}}=\left(\int_{\Omega}|F(x+iy)|^2(y_n-|y'|^2)^{\alpha}dxdy\right)^{\frac1{2}}<\infty.\label{aanorm}
\end{equation}
 Then  the reproducing kernel for the Hilbert space $A^2_\alpha $ is
\begin{equation}
K_\alpha(z,w)=C_{1,\alpha} ((z'-\bar{w'})^2-2i (z_n-\bar{w}_{n}))^{-n-\alpha-1},\label{laplacetransform2}
\end{equation}
where $z'=(z_1,\ldots,z_{n-1})$, $w'=(w_1,\ldots,w_{n-1})$ and
\begin{equation}
C_{1,\alpha}=\frac{2^{n+1+2\alpha}\Gamma(n+\alpha+1)}{\Gamma(\alpha+1)\pi^{n}}.\label{laplacetransform2-1}
\end{equation}
Here, $\Re \{(z'-\bar{w'})^2-2i (z_n-\bar{w}_{n})\}=2(y_n+v_{n})+(x'-u')\cdot(x'-u')-(y'+v')\cdot (y'+v')>0$, then $|\arg ((z'-\bar{w'})^2-2i (z_n-\bar{w}_{n}))|<\frac{\pi}{2}
$, in which $ z=(z',z_n)=(x',x_n)+i(y',y_n)$, $ w=(w',w_n )=(u',u_n)+i(v',v_n)\in T_B$ and $z'^2=z'\cdot z'=z^2_1+z^2_2+\ldots+z^2_{n-1}$.

\begin{proof}
We fist compute $I(t)$ as follows,
\begin{equation}
I(t)=\int_Be^{-4\pi y\cdot t}(y_n-|y'|^2)^{\alpha}dy=\int_0^{\infty}\int_{|y'|^2<y_n}(y_n-|y'|^2)^{\alpha}e^{-4\pi(y_nt_n+y'\cdot t')}dy'dy_n.\label{icompute}
\end{equation}
 Performing variable substitution on (\ref{icompute}) with  $x_n=y_n-|y'|^2$, $x'=y'$, then  $y_n=x_n+|x'|^2$. We then obtain
\begin{eqnarray*}
I(t)&=&\int_0^{\infty} \int_{\mathbb R^{n-1}}x_n^{\alpha} e^{-4\pi(x_n+|x'|^2)t_n}e^{-4\pi x'\cdot t'}dx'dx_n\\
&=&\int_0^{\infty}x_n^{\alpha}e^{-4\pi x_nt_n}\left(\prod_{k=1}^{n-1}\int_{\mathbb R}e^{-4\pi x_k^2t_n-4\pi x_kt_k}dx_k\right)dx_n.
\end{eqnarray*}
It is obvious that $I(t)=\infty$ when $t_n\leq0$, For $t_n>0 $,
\begin{eqnarray*}
I(t)&=&\int_0^{\infty}x_n^{\alpha}e^{-4\pi x_nt_n}\left(\prod_{k=1}^{n-1}\int_{\mathbb R}e^{-4\pi x_k^2t_n-4\pi x_kt_k}dx_k\right)dx_n\\
&=&\int_0^{\infty}x_n^{\alpha}e^{-4\pi x_nt_n}\left(\prod_{k=1}^{n-1}\int_{\mathbb R}e^{-4\pi t_n\left(x_k^2-\frac{x_kt_k}{t_n}+\left(\frac{t_k}{2t_n}\right)^2- \left(\frac{t_k}{2t_n}\right)^2\right)}dx_k\right)dx_n\\
&=&\int_0^{\infty}x_n^{\alpha}e^{-4\pi x_nt_n}\left(\prod_{k=1}^{n-1}\int_{\mathbb R}e^{-4\pi t_n\left(x_k-\frac{t_k}{2t_n}\right)^2}e^{ \frac{ \pi t_k^2}{t_n}}dx_k\right)dx_n.
\end{eqnarray*}
Let $s=x_k-\frac{t_k}{2t_n}$ and $4\pi t_ns^2=\eta^2$, then
\begin{eqnarray*}
I(t)&=&\int_0^{\infty}x_n^{\alpha}e^{-4\pi x_nt_n}\left(\prod_{k=1}^{n-1}e^{ \frac{ \pi t_k^2}{t_n}}\int_{\mathbb R}e^{-4\pi t_ns^2}ds\right)dx_n\\
&=&\int_0^{\infty}x_n^{\alpha}e^{-4\pi x_nt_n}\left(\prod_{k=1}^{n-1}\frac{ e^{ \frac{ \pi t_k^2}{t_n}}}{2\sqrt{\pi t_n}}\int_{\mathbb R}e^{-\eta^2}d\eta\right)dx_n\\
&=&\int_0^{\infty}x_n^{\alpha}e^{-4\pi x_nt_n}e^{ \frac{ \pi |t'|^2}{t_n}}\left(\frac{\sqrt{\pi}}{2\sqrt{\pi t_n}}\right)^{n-1}dx_n\\
&=&e^{ \frac{ \pi |t'|^2}{t_n}}(2\sqrt{t_n})^{1-n}\int_0^{\infty}x_n^{\alpha}e^{-4\pi x_nt_n}dx_n.
\end{eqnarray*}
Putting $u=-4\pi x_nt_n$, we have
\begin{eqnarray*}
I(t)&=&e^{ \frac{ \pi |t'|^2}{t_n}}(2\sqrt{t_n})^{1-n}\int_0^{\infty}\frac{1}{(4\pi t_n)^{\alpha+1}}u^{\alpha}e^{u}du\\
&=&\frac{2^{1-n}\Gamma(\alpha+1)}{(4\pi )^{\alpha+1}}e^{ \frac{ \pi |t'|^2}{t_n}}t_n^{\frac12(1-n)-\alpha-1}.
\end{eqnarray*}

Now we can compute the formula of weighted Bergman kernel. According to the representation form of reproducing kernel,
\begin{eqnarray*}
K_{\alpha}(z,z_0)=\int_{\mathbb R^n}e^{2\pi i(z-\bar{z_0})\cdot t}I^{-1}(t)dt=\frac{(4\pi )^{\alpha+1}2^{n-1}}{\Gamma(\alpha+1)}\int_{\mathbb R^n}e^{2\pi i(z-\bar{z_0})\cdot t-\frac{ \pi |t'|^2}{t_n}}t_n^{\frac12(n-1)+\alpha+1}dt.
\end{eqnarray*}
Set $w=z-\bar{z_0}$ and $C_{\alpha}=\frac{(4\pi )^{\alpha+1}2^{n-1}}{\Gamma(\alpha+1)}$, then
\begin{eqnarray*}
K_{\alpha}(z,z_0)&=&C_{\alpha}\int_{\mathbb R^n}e^{2\pi iw\cdot t-\frac{ \pi |t'|^2}{t_n}}t_n^{\frac12(n-1)+\alpha+1}dt\\
&=&C_{\alpha}\int_0^{\infty}\int_{\mathbb R^{n-1}}e^{2\pi i(w_nt_n+w'\cdot t')-\frac{ \pi |t'|^2}{t_n}}t_n^{\frac12(n-1)+\alpha+1}dt'dt_n\\
&=&C_{\alpha}\int_0^{\infty}e^{2\pi i w_nt_n }t_n^{\frac12(n-1)+\alpha+1}\left(\int_{\mathbb R^{n-1}}e^{2\pi i w'\cdot t' -\frac{ \pi |t'|^2}{t_n}}dt'\right)dt_n\\
&=&C_{\alpha}\int_0^{\infty}e^{2\pi i w_nt_n }t_n^{\frac12(n-1)+\alpha+1}\left(\prod_{k=1}^{n-1}\int_{\mathbb R}e^{2\pi i w_kt_k -\frac{ \pi t_k^2}{t_n}}dt_k\right)dt_n,
\end{eqnarray*}
in which
\begin{eqnarray*}
\int_{\mathbb R}e^{2\pi i w_kt_k -\frac{ \pi t_k^2}{t_n}}dt_k &=&\int_{\mathbb R}e^{ -\frac{ \pi}{t_n}\left(t_k^2-2i w_kt_nt_k+\left(iw_kt_n\right)^2+w_k^2t_n^2\right)}dt_k\\
 &=&e^{-\frac{ \pi w_k^2t_n^2}{t_n}}\int_{\mathbb R}e^{ -\frac{ \pi}{t_n}\left(t_k-iw_kt_n\right)^2}dt_k.
\end{eqnarray*}
Let $s= t_k-iw_kt_n$ and $\eta^2=\frac{ \pi}{t_n}s^2$, then
\begin{eqnarray*}
\int_{\mathbb R}e^{2\pi i w_kt_k -\frac{ \pi t_k^2}{t_n}}dt_k
=e^{-\frac{ \pi w_k^2t_n^2}{t_n}}\int_{\mathbb R}e^{ -\frac{ \pi}{t_n}s^2}ds
=e^{-\frac{ \pi w_k^2t_n^2}{t_n}}\int_{\mathbb R}\left(\frac{t_n}{\pi}\right)^{\frac12}e^{ -\eta^2}d\eta
=e^{-\frac{ \pi w_k^2t_n^2}{t_n}}t_n^{\frac12}.
\end{eqnarray*}
Therefore,
\begin{eqnarray*}
K_{\alpha}(z,z_0)&=&C_{\alpha}\int_0^{\infty}e^{2\pi i w_nt_n }t_n^{\frac12(n-1)+\alpha+1}\prod_{k=1}^{n-1}\left(e^{-\frac{ \pi w_k^2t_n^2}{t_n}}t_n^{\frac12}\right)dt_n\\
&=&C_{\alpha}\int_0^{\infty}e^{2\pi i w_nt_n }t_n^{\frac12(n-1)+\alpha+1}e^{-\frac{ \pi \sum_{k=1}^{n-1}w_k^2t_n^2}{t_n}}t_n^{\frac12(n-1)}dt_n\\
&=&C_{\alpha}\int_0^{\infty}e^{\pi t_n(2i w_n- w'\cdot w')}t_n^{n+\alpha}dt_n.
\end{eqnarray*}
In order to compute $K_{\alpha}(z,z_0)$, it suffices to show that $\Re(2i w_n- w'\cdot w')<0$, where $w=z-\bar{z_0}$ for $z,z_0 \in T_B$. In fact, since $\Re(iw_n)=-\Im w_n$, $\Re (w'\cdot w')=(\Re w' )^2-(\Im w' )^2$, for $w_n=z_n-\bar{z}_{0,n}$,
\begin{eqnarray*}
\Re(2i w_n- w'\cdot w')&=&-2\Im w_n-\sum_{k=1}^{n-1}\left((\Re w_k)^2-(\Im w_k)^2\right)\\
&=&-2(y_n+y_{0,n})+\sum_{k=1}^{n-1}(y_k+y_{0,k})^2-\sum_{k=1}^{n-1}(x_k-x_{0,k})^2\\
&\leq& -2(y_n+y_{0,n})+\sum_{k=1}^{n-1}(y_k^2+y_{0,k}^2+2 y_k y_{0,k})\\
&\leq& -2(y_n+y_{0,n})+2\sum_{k=1}^{n-1}(y_k^2+y_{0,k}^2)\\
&=&2\left(\left(-y_n+\sum_{k=1}^{n-1}y_k^2\right) +\left(-y_{0,n}+\sum_{k=1}^{n-1}y_{0,k}^2\right)\right)<0
.\end{eqnarray*}
Now, we can continue to calculate $K_{\alpha}(z,z_0)$. Let $u=\pi t_n(w'\cdot w'-2i w_n )$,
\begin{eqnarray*}
K_{\alpha}(z,z_0)&=&C_{\alpha}\int_0^{\infty}e^{-\pi t_n(w'\cdot w'-2i w_n )}t_n^{n+\alpha}dt_n\\
&=&C_{\alpha}\int_0^{\infty}\frac{1}{(\pi(w'\cdot w'-2i w_n ))^{n+\alpha+1}}e^{-u}u^{n+\alpha}du\\
&=&\frac{C_{\alpha}\Gamma(n+\alpha+1)}{(\pi(w'\cdot w'-2i w_n ))^{n+\alpha+1}}\\
&=& C_{\alpha,1}(w'\cdot w'-2i w_n )^{-n-\alpha-1}\\
&=&C_{\alpha,1}(z'-\bar{z}_0')^2-2i (z_n-\bar{z}_{0,n}))^{-n-\alpha-1},
\end{eqnarray*}\\
which shows that (\ref{laplacetransform2}) holds.
\end{proof}

 {\bf Example 2.} Denote by $\Omega_n $ the Siegel domain in $\mathbb C^n$, and define it as
$$
\Omega_n =\{z=x+iy:y_n>|z'|^2\},
$$
where $z=(z',z_n), z_n=x_n+iy_n.$ Let $\alpha \in \mathbb{R}$ and denote by  $A_{\alpha}^2(\Omega_n)$ the space of analytic  functions $F(z)$ in the Siegel  domain $\Omega_n $ satisfying
\begin{equation}
\|F\|_{A^2_{\alpha}(\Omega_n)}=\left(\int_{\Omega_n}|F(x+iy)|^2(y_n-|z'|^2)^{\alpha}dxdy\right)^{\frac1{2}}<\infty.\label{bnorm}
\end{equation}
 Then  the reproducing kernel for the Hilbert space $A^2_\alpha (\Omega_n)$ is
\begin{equation}
K_{\alpha,\Omega_n}(z,w)=C_{2,\alpha}( i (\bar{w}_n-z_{n})-2z'\cdot \bar{w}')^{-n-\alpha-1}\label{laplacetransform3},
\end{equation}
where $C_{2,\alpha}=2^{-2-\alpha} C_{1,\alpha}$, $ C_{1,\alpha}$  is defined by (\ref{laplacetransform2-1}).

\begin{proof}
Define a transform $\Phi:\Omega_n\to T_B$ as $w=(w',w_n)=\Phi(z)=(2^{\frac12}z',z_n-iz'\cdot z')$. Then we observe that the Siegel domain $\Omega_n $  is biholomorphically equivalent to the tube domain $T_B$ over $B=\{v\in\mathbb R^n:v_n>|v'|^2\}$ via $\Phi$. The inverse of $\Phi$ is $z=\Phi^{-1}(w)=(2^{-\frac12}w',w_n+\frac{i}{2}w'\cdot w')$, and the determinants of the holomorphic Jacobian matrixes of $w=\Phi (z) $ and $z=\Phi^{-1}(w) $ are $(D\Phi)(z)=2^{\frac12(n-1)}$ and $(D\Phi^{-1})(w)=2^{-\frac12(n-1)}$ respectively.
For $w=u+iv$, $ z=x+iy \in T_B$,
\begin{eqnarray*}
y_n-|z'|^2&=&y_n-\sum_{k=1}^{n-1}(x_k^2+y_k^2)\\
&=&\Im(w_n+\frac{i}{2}w'\cdot w')-\frac12|w'|^2\\
&=&v_n+\frac12(u'\cdot u'-v'\cdot v')-\frac12|w'|^2\\
&=&v_n+\frac12\sum_{k=1}^{n-1}(u_k^2-v_k^2)-\frac12\sum_{k=1}^{n-1}(u_k^2+v_k^2)\\
&=&v_n-|v'|^2.
\end{eqnarray*}
By Lemma 3, the reproducing kernel $K_{\alpha, \Omega_n} $ of the  Hibert space $A^2_\alpha (\Omega_n)$ is
\begin{equation}
K_{\alpha, \Omega_n}(z,w)=2^{n-1}K_{\alpha}(\Phi(z),\Phi(w))=C_{1,\alpha}2^{-2-\alpha}( i (\bar{w}_n-z_{n})-2z'\cdot \bar{w}')^{-n-\alpha-1},\label{laplacetransform3}
\end{equation}
where $ C_{1,\alpha} $ is defined by (\ref{laplacetransform2-1}).
\end{proof}

{\bf Example 3.}  For $\alpha \in \mathbb{R}$, denote by $B_n$ the unit ball  $\{z:|z|<1\}$ and $A_{\alpha}^2(B_n)$ the space of analytic  functions $F(z)$ on $B_n $ satisfying
\begin{equation}
\|F\|_{A^2_{\alpha}(B_n)}=\left(\int_{B_n}|F(z)|^2\frac{(1-|z|^2)^{\alpha}}{|1+z_n|^{2\alpha} }dxdy\right)^{\frac1{2}}<\infty.\label{anorm-2}
\end{equation}
 Then  the reproducing kernel for the Hilbert space $A^2_\alpha (B_n)$ is
\begin{equation}
K_{\alpha,B_n}(z,w)=C_{3,\alpha}(1+\bar{w}_n)^\alpha(1+z_n)^\alpha (1-\bar w\cdot z)^{-n-\alpha-1},\label{laplacetransform-2}
\end{equation}
with $C_{3,\alpha}=2^{1-3\alpha-n} C_{2,\alpha}=2^{-4\alpha-n-1} C_{1,\alpha}$, where $C_{1,\alpha}$  is defined by (\ref{laplacetransform2-1}).

\begin{proof}
Note that the Siegel domain $\Omega_n $  is an unbounded realization of $B_n$, i.e., $\Omega_n$ is  biholomorphically  equivalent to $B_n$. The corresponding biholomorphic  automorphism (so-called the Cayley transform) $ w=\Phi (z): B_n  \mapsto \Omega_n$ can be written in the following explicit form,
 \begin{equation*}
w=\Phi(z)=\left(\frac{2z'}{z_n+1},\frac{4i(1-z_n)}{1+z_n}\right)\ \mbox{for}\ z\in B_n.\ \
\end{equation*}
And its inverse form is
 \begin{equation*}
z=\Phi^{-1}(w)=\left(\frac{w'}{1-\frac{i}{4}w_n},\frac{1+\frac{i}{4}w_n}{1-\frac{i}{4}w_n}\right) \ \mbox{for}\ w\in \Omega_n.
\end{equation*}
 The determinants of the holomorphic Jacobian matrixes of $w=\Phi (z) $ and $z=\Phi^{-1}(w) $ are
\begin{equation*}
(D\Phi)(z)=\frac{-i2^{n+2}}{(1+z_n)^{n+1}} \mbox{ and } (D\Phi^{-1})(w)=\frac{i}{2(1-\frac{i}{4})^{n+1}}
\end{equation*}
respectively.
By Lemma 3, the reproducing kernel $K_{\alpha, B_n} $ of the  Hibert space $A^2_\alpha (B_n)$ is
\begin{eqnarray}
K_{\alpha, B_n}(z,w)&=&\frac{-i2^{n+2}}{(1+z_n)^{n+1}}K_{\alpha, \Omega_n}(\Phi(z),\Phi(w))\frac{i2^{n+2}}{(1+\bar{w}_n)^{n+1}}\nonumber\\
&=&C_{3,\alpha}(1+\bar{w}_n)^\alpha(1+z_n)^\alpha (1-\bar w\cdot z)^{-n-\alpha-1}\label{laplacetransform3-2}
\end{eqnarray}
with $C_{3,\alpha}=2^{1-3\alpha-n} C_{2,\alpha}=2^{-4\alpha-n-1} C_{1,\alpha}$, where $C_{1,\alpha}$  is defined by (\ref{laplacetransform2-1}).
\end{proof}

{\bf Example 4.}
Suppose that $\Lambda_n$ is the Lorentz cone (or  a forward light cone) defined by $\{y=(y',y_n):y_n>|y'|\}$. The quadratic function  $\Delta(y)=y_n^2-|y'|^2 $ is call the Lorentz form. Let $\alpha\in\mathbb R$ and denote by  $A_{\alpha}^2(T_{\Lambda_n})$ the space of analytic  functions $F(z)$ in the tube domain $T_{\Lambda_n}$ over the forward cone $\Lambda_n $ such that
\begin{equation}
\|F\|_{A^2_{\alpha}(\Lambda_n)}=\left(\int_{\Lambda_n}\int_{\mathbb{R}^n}|F(x+iy)|^2(\Delta(y))^{\alpha}dxdy\right)^{\frac1{2}}<\infty.\label{anorm-3}
\end{equation}
 Then  the reproducing kernel for the Hilbert space $A^2_\alpha (T_{\Lambda_n})$ is
\begin{equation}
K_{\alpha,\Lambda_n}(z,w)=C_{4,\alpha}P(z-\bar w)^{-\alpha-n},\label{laplacetransform4}
\end{equation}
where $C_{4,\alpha}$  is defined by
\begin{equation}
C_{4,\alpha}=\frac{4^{\alpha}\Gamma(\alpha+\frac n2+1)\Gamma(2\alpha+2n)\Gamma(\alpha+\frac n2+\frac12)}{\pi^n\Gamma(\alpha+1)\Gamma(2\alpha+n)\Gamma(\alpha+n+\frac 12)}
 \label{laplacetransform5}
\end{equation}
and $P(z)=z_1^2+\cdots+z_{n-1}^2-z_n^2$ satisfying $P(z)\subset \mathbb C\setminus(-\infty,0]$ for $z\in T_{\Lambda_n}.$

\begin{proof}
For $t=(t',t_n)\in \mathbb{R}^n$ and Let $a=4\pi t=(a',a_n)$, then based on the form of kernel in Theorem 1, we have
\begin{eqnarray*}
I(t)=\int_B e^{-4\pi y\cdot t}(\Delta(y))^{\alpha}dy=\int_0^{\infty}\int_{|y'|<y_n}e^{-a_ny_n-a'\cdot y'}(\Delta(y))^{\alpha}dy'dy_n.
\end{eqnarray*}
We now choose an orthogonal matrix $A$ such that $Ae_1=\frac{a'}{|a'|}$, $A^TA=I$ and $\{Ae_1,\ldots,Ae_{n-1}\}$ is also the orthogonal basis in $\mathbb R^{n-1}$, where $I$ is the identity matrix and $A^T$ is a transposed matrix of $A$. Hence $a'\cdot y'=Ae_1|a'|\cdot y'=|a'|y_1$. Write $y=(y',y_n)=(y_1,y'',y_n)$, then
\begin{eqnarray*}
I(t)&=&\int_0^{\infty}\int_{-y_n}^{y_n}\int_{|y''|<\sqrt{y_n^2-y_1^2}} e^{-a_ny_n-|a'|y_1}(y_n^2-y_1^2-|y''|^2)^{\alpha}dy''dy_1dy_n\\
&=&\int_0^{\infty}\int_{0}^{y_n}\int_{|y''|<\sqrt{y_n^2-y_1^2}} 2e^{-a_ny_n}\cosh (|a'|y_1)(y_n^2-y_1^2-|y''|^2)^{\alpha}dy''dy_1dy_n.\\
\end{eqnarray*}
where
$\cosh (s)=\frac{1}{2}(e^{s}+e^{-s}).
$
Letting $\sinh (s)=\frac{1}{2}(e^{s}-e^{-s})
$ and $y=\Phi(x)=(x_n\sinh x_1,x'',x_n\cosh x_1)$, together with $0\leq\sinh x_1<\cosh x_1$, we have $0<y_1<y_n$. Performing variables substitution to the above formula, then
\begin{eqnarray*}
I(t)=\int_0^{\infty}\int_{0}^{\infty}\int_{|x''|<x_n}2\cosh (|a'|x_n\sinh x_1)e^{-a_nx_n\cosh x_1}(x_n^2-|x''|^2)^{\alpha}(D\Phi)(x)dx''dx_1dx_n,
\end{eqnarray*}
in which $(D\Phi)(x)$ is the determinant of the Jacobian matrix of $\Phi(x)$ and $(D\Phi)(x)=x_n$. Put $t''=\frac{|x''|}{x_n}$, then
\begin{eqnarray*}
I(t)&=&\int_0^{\infty}\int_{0}^{\infty}\int_{|t''|<1}2\cosh (|a'|x_n\sinh x_1)e^{-a_nx_n\cosh x_1}(1-|t''|^2)^{\alpha}x_n^{2\alpha+n-1}dt''dx_1dx_n\\
&=& 2C_{\alpha}\int_0^{\infty}\int_{0}^{\infty}\cosh (|a'|x_n\sinh x_1)e^{-a_nx_n\cosh x_1}x_n^{2\alpha+n-1}dx_1dx_n,
\end{eqnarray*}
where $C_{\alpha}=\int_{|t''|<1}(1-|t''|^2)^{\alpha}dt''$. Now we compute the value of $C_{\alpha}$ as follows,
\begin{eqnarray*}
C_{\alpha}= \int_0^1\rho^{n-3}\int_{|\xi|=1}(1-\rho^2)^{\alpha}d\sigma_{n-2}(\xi)d\rho =S_{n-2}\int_0^1\rho^{n-3}(1-\rho^2)^{\alpha}d\rho,
\end{eqnarray*}
where $S_{n-2}=\frac{2\pi^{\frac{n-1}{2}}}{\Gamma(\frac{n-1}{2})}$ is the surface area of $n-2$ dimensional sphere. Letting $s=\rho^2$,
\begin{eqnarray*}
C_{\alpha}&=&\frac{S_{n-2}}{2}\int_0^1s^{\frac{n-3}{2}}(1-s)^{\alpha}s^{-\frac12}ds =\frac{ \pi^{\frac{n-1}{2}}}{\Gamma(\frac{n-1}{2})}B\left(\frac{n-2}{2},\alpha+1\right)\\
 &=&\frac{ \pi^{\frac{n-1}{2}}\Gamma(\frac n2-1)\Gamma(\alpha+1)}{\Gamma(\frac{n-1}{2})\Gamma(\frac n2+\alpha)}.
\end{eqnarray*}
Therefore,
\begin{eqnarray*}
I(t)&=& 2C_{\alpha}\int_0^{\infty}\int_{0}^{\infty}\cosh (|a'|x_n\sinh x_1)e^{-a_nx_n\cosh x_1}x_n^{2\alpha+n-1}dx_1dx_n\\
&=&C_{\alpha}\int_{0}^{\infty}\int_{-\infty}^{\infty}e^{-x_n\left(a_n\cosh x_1-|a'|\sinh x_1\right)}x_n^{2\alpha+n-1}dx_1dx_n.
\end{eqnarray*}
Let $s=-x_n\left(a_n\cosh x_1-|a'|\sinh x_1\right)$, then
\begin{eqnarray*}
I(t)&=&C_{\alpha}\int_{-\infty}^{\infty}\frac{1}{(a_n\cosh x_1-|a'|\sinh x_1)^{2\alpha+n}}\left(\int_{0}^{\infty}e^{s}s^{2\alpha+n-1}ds\right) dx_1\\
&=&C_{\alpha}\Gamma(2\alpha+n)\int_{-\infty}^{\infty}\frac{1}{\left(\frac{a_n\cosh x_1-|a'|\sinh x_1}{\sqrt{a_n^2-|a'|^2}}\right)^{2\alpha+n}(a_n^2-|a'|^2)^{\alpha+\frac n2}}dx_1.
\end{eqnarray*}
Since $\left(\frac{a_n}{\sqrt{a_n^2-|a'|^2}}\right)^2-\left(\frac{|a'|^2}{\sqrt{a_n^2-|a'|^2}}\right)^2=1$, put $\cosh t_0=\frac{a_n}{\sqrt{a_n^2-|a'|^2}}$ and $\sinh t_0=\frac{|a'|^2}{\sqrt{a_n^2-|a'|^2}}$, then
\begin{eqnarray*}
I(t)&=&\frac{C_{\alpha}\Gamma(2\alpha+n)}{(a_n^2-|a'|^2)^{\alpha+\frac n2}}\int_{-\infty}^{\infty}\frac{1}{\left(\cosh t_0\cosh x_1-\sinh t_0\sinh x_1\right)^{2\alpha+n}}dx_1\\
&=&\frac{C_{\alpha}\Gamma(2\alpha+n)}{(a_n^2-|a'|^2)^{\alpha+\frac n2}}\int_{-\infty}^{\infty}\frac{1}{\left(\cosh(x_1-t_0)\right)^{2\alpha+n}}dx_1\\
&=&\frac{C_{\alpha}\Gamma(2\alpha+n)}{(a_n^2-|a'|^2)^{\alpha+\frac n2}}\int_{-\infty}^{\infty}\frac{1}{\left(\cosh t\right)^{2\alpha+n}}dt.
\end{eqnarray*}
Write $J(t)=\int_{-\infty}^{\infty}\frac{1}{\left(\cosh t\right)^{2\alpha+n}}dt$ and let $(\cosh t)^2=s$,
\begin{eqnarray*}
J(t)=2\int_{-\infty}^{\infty}\frac{1}{2s^{\alpha+\frac n2+\frac12}(s-1)^{\frac12}}ds= \int_{-\infty}^{\infty}\frac{1}{ s^{\alpha+\frac n2+1}(1-\frac 1s)^{\frac12}}ds.
\end{eqnarray*}
Set $\frac 1s=k$, then
\begin{eqnarray*}
J(t)=\int_0^1 k^{\alpha+\frac n2+1}(1-k)^{-\frac12}k^{-2}dk=B\left(\alpha+\frac n2,\frac12\right)=\frac{\Gamma(\alpha+\frac n2)\Gamma({\frac12})}{\Gamma(\alpha+\frac{n+1}{2})}.
\end{eqnarray*}
As a result,
\begin{eqnarray*}
I(t)&=&\frac{\pi^{\frac{n-1}{2}}\Gamma(\frac n2-1)\Gamma(\alpha+1)\Gamma(2\alpha+n)\Gamma({\frac12})}{\Gamma(\frac{n-1}{2}) \Gamma(\alpha+\frac{n+1}{2})}(\Delta (a))^{-\alpha-\frac n2}\\
&=&\frac{\pi^{\frac{n-1}{2}}\Gamma(\frac n2-1)\Gamma(\alpha+1)\Gamma(2\alpha+n)\Gamma({\frac12})}{(4\pi)^{2\alpha+n}\Gamma(\frac{n-1}{2}) \Gamma(\alpha+\frac{n+1}{2})}(\Delta (t))^{-\alpha-\frac n2}\\
&=&\widetilde C_{\alpha,n}(\Delta (t))^{-\alpha-\frac n2},
\end{eqnarray*}
where $\widetilde C_{\alpha,n}=\frac{\pi^{\frac{n-1}{2}}\Gamma(\frac n2-1)\Gamma(\alpha+1)\Gamma(2\alpha+n)\Gamma({\frac12})}{(4\pi)^{2\alpha+n}\Gamma(\frac{n-1}{2}) \Gamma(\alpha+\frac{n+1}{2})}$. In order to obtain the representation form of the reproducing kernel $K_{\alpha,\Lambda_n}(z,w)$, $I(t)$ should be finite, i.e., $t_n^2>|t'|^2$. Therefore,
\begin{eqnarray*}
K(z)=\int_{\mathbb R^n}\frac{e^{2\pi it\cdot z}}{I(t)}dt=\frac{1}{\widetilde C_{\alpha,n}}\int_{\mathbb R^n}e^{2\pi it\cdot z}(\Delta (t))^{\alpha+\frac n2}dt.
\end{eqnarray*}
Let $K_1(z)=\widetilde C_{\alpha,n}K(z)$, then performing variables substitution as that in the computation of $I(t)$,
\begin{eqnarray*}
K_1(iy)&=&\int_{\mathbb R^n}e^{-2\pi t\cdot y}(\Delta (t))^{\alpha+\frac n2}dt\\
&=&\int_0^{\infty}\int_{|t'|<t_n}e^{-2\pi y_nt_n-2\pi t'\cdot y'}(\Delta (t))^{\alpha+\frac n2}dt'dt_n\\
&=&\int_0^{\infty}\int_{-t_n}^{t_n}\int_{|t''|<\sqrt{t_n^2-t_1^2}}e^{-2\pi y_nt_n-2\pi |y'|t_1}(t_n^2-|t_1|^2-|t''|^2)^{\alpha+\frac n2}dt''dt_1dt_n.
\end{eqnarray*}
Letting $t=\Phi (u)=(u_n\sinh u_1,u'',u_n\cosh u_1)$, then $(D\Phi)(u)=u_n$. Therefore,
\begin{equation*}
K_1(iy)=\int_0^{\infty}\int_{-\infty}^{\infty}\int_{|u''|<u_n}e^{-2\pi y_nu_n\cosh u_1-2\pi |y'|u_n\sinh u_1}(u_n^2-|u''|^2)^{\alpha+\frac n2}u_ndu''du_1du_n.
\end{equation*}
Put $s''=\frac{u''}{u_n}$, then
\begin{equation*}
K_1(iy)=\int_0^{\infty}\int_{-\infty}^{\infty}\int_{|s''|<1}e^{-2\pi y_nu_n\cosh u_1-2\pi |y'|u_n\sinh u_1}(1-|s''|^2)^{\alpha+\frac n2}u_n^{1+2\alpha+n+n-2}ds''du_1du_n,
\end{equation*}
in which
\begin{eqnarray*}
\int_{|s''|<1}(1-|s''|^2)^{\alpha+\frac n2}ds''
&=&\int_0^1\rho^{n-2-1}\int_{|\zeta|=1}(1-\rho^2)^{\alpha+\frac n2}d\sigma_{n-2}(\zeta)d\rho\\
&=&S_{D_{n-2}}\int_0^1\rho^{n-3}(1-\rho^2)^{\alpha+\frac n2}d\rho\\
&=&S_{D_{n-2}}\int_0^1s^{\frac{n-3}{2}}(1-s)^{\alpha+\frac n2}\frac12s^{-\frac12}ds=\frac{S_{D_{n-2}}}{2}B\left(\frac{n-2}{2},\alpha+\frac n2+1\right)
\end{eqnarray*}
by letting $s=\rho^2$. Here, $S_{D_{n-2}}$ is the surface area of $n-2$ dimensional unit ball and $B\left(\frac{n-2}{2},\alpha+\frac n2+1\right)$ is a beta function. Let $k_{\alpha,n}=\frac{S_{D_{n-2}}}{2}B\left(\frac{n-2}{2},\alpha+\frac n2+1\right)$. For $u_1\in\mathbb R$, $\sinh u_1=-\sinh(-u_1)$, we then have
\begin{eqnarray*}
K_1(iy)=k_{\alpha,n}\int_0^{\infty}\int_{-\infty}^{\infty} e^{-(2\pi y_nu_n\cosh u_1-2\pi |y'|u_n\sinh u_1)} u_n^{ 2\alpha+2n-1} du_1du_n.
\end{eqnarray*}
Let $2\pi y_nu_n\cosh u_1-2\pi |y'|u_n\sinh u_1=s$, then
\begin{eqnarray*}
K_1(iy)&=& k_{\alpha,n}\int_{-\infty}^{\infty} \frac{1}{(2\pi y_n \cosh u_1-2\pi |y'| \sinh u_1)^{2\alpha+2n}} \left(\int_0^{\infty}e^{-s} s^{ 2\alpha+2n -1}ds\right) du_1\\
&=&\frac{k_{\alpha,n}\Gamma(2\alpha+2n)}{(2\pi)^{2\alpha+2n}}\int_{-\infty}^{\infty} \frac{1}{( y_n \cosh u_1- |y'| \sinh u_1)^{2\alpha+2n}}du_1\\
&=&\frac{k_{\alpha,n}\Gamma(2\alpha+2n)}{(2\pi)^{2\alpha+2n}}\int_{-\infty}^{\infty} \frac{1}{\left( \frac{y_n \cosh u_1- |y'| \sinh u_1}{\sqrt{y_n^2-|y'|^2}}\right)^{2\alpha+2n}(y_n^2-|y'|^2)^{\alpha+n}}du_1.
\end{eqnarray*}
Put $\frac{y_n}{\sqrt{y_n^2-|y'|^2}}=\cosh t_0$ and $\frac{|y'|}{\sqrt{y_n^2-|y'|^2}}=\sinh t_0$, then
\begin{eqnarray*}
K_1(iy)&=&\frac{k_{\alpha,n}\Gamma(2\alpha+2n)}{(2\pi)^{2\alpha+2n}(y_n^2-|y'|^2)^{\alpha+n}}\int_{-\infty}^{\infty} \frac{1}{\left( \cosh t_0\cosh u_1-\sinh t_0\sinh u_1\right)^{2\alpha+2n}}du_1\\
&=&\frac{k_{\alpha,n}\Gamma(2\alpha+2n)}{(2\pi)^{2\alpha+2n}(y_n^2-|y'|^2)^{\alpha+n}}\int_{-\infty}^{\infty} \frac{1}{\left( \cosh (u_1-t_0)\right)^{2\alpha+2n}}du_1.
\end{eqnarray*}
Set $u_1-t_0=t$, $(\cosh t)^2=s$ and $\frac 1s=s'$, we have
\begin{eqnarray*}
K_1(iy)&=&\frac{k_{\alpha,n}\Gamma(2\alpha+2n)}{(2\pi)^{2\alpha+2n}(y_n^2-|y'|^2)^{\alpha+n}}\int_{-\infty}^{\infty} \frac{1}{\left( \cosh t\right)^{2\alpha+2n}}dt\\
&=&\frac{2k_{\alpha,n}\Gamma(2\alpha+2n)}{(2\pi)^{2\alpha+2n}(y_n^2-|y'|^2)^{\alpha+n}}\int_{0}^{\infty} \frac{1}{2 s^{\alpha+n}\sqrt{s-1}\sqrt{s}}ds\\
&=&\frac{ k_{\alpha,n}\Gamma(2\alpha+2n)}{(2\pi)^{2\alpha+2n}(y_n^2-|y'|^2)^{\alpha+n}}\int_{0}^{\infty} \frac{1}{ s^{\alpha+n+1}\sqrt{1-\frac 1s}}ds\\
&=&\frac{ k_{\alpha,n}\Gamma(2\alpha+2n)}{(2\pi)^{2\alpha+2n}(y_n^2-|y'|^2)^{\alpha+n}}\int_{0}^{1} (s')^{\alpha+n+1}(1-s')^{-\frac12}(s')^{-2}ds'\\
&=&\frac{ k_{\alpha,n}\Gamma(2\alpha+2n)}{(2\pi)^{2\alpha+2n}(y_n^2-|y'|^2)^{\alpha+n}} B\left(\alpha+n,\frac12\right).
\end{eqnarray*}
Therefore,
$$
K(iy)=\frac{4^{\alpha}\Gamma(\alpha+\frac n2+1)\Gamma(2\alpha+2n)\Gamma(\alpha+\frac n2+\frac12)}{\pi^n\Gamma(\alpha+1)\Gamma(2\alpha+n)\Gamma(\alpha+n+\frac 12)}(\Delta (y))^{-\alpha-n}.
$$
On the other hand, if we let $P(z)=z_1^2+\cdots+z_{n-1}^2-z_n^2$, then  $P(z)\in  \mathbb C\setminus(-\infty,0]$ for $z\in T_{\Lambda_n}$. Indeed,
\begin{eqnarray*}
P(z)&=&z_1^2+\cdots+z_{n-1}^2-z_n^2\\
&=&(x_1+iy_1)^2+\cdots+(x_{n-1}+iy_{n-1})^2-z_n^2\\
&=&(x_1^2-y_1^2)+\cdots+(x_{n-1}^2-y_{n-1}^2)-(x_n^2-y_n^2) +2i(x_1y_1+\cdots+x_{n-1}y_{n-1}-x_ny_n)\\
&=&\left(\sum_{k=1}^{n-1}x_k^2-x_n^2\right)+\left(y_n^2-\sum_{k=1}^{n-1}y_k^2\right) +2i\left(\sum_{k=1}^{n-1}x_ky_k-x_ny_n\right).
\end{eqnarray*}
It follows from $\sum_{k=1}^{n-1}x_ky_k=x_ny_n$ that $\sum_{k=1}^{n-1}x_k\frac{y_k}{y_n}=x_n$, then $x_n^2=(\sum_{k=1}^{n-1}x_k\frac{y_k}{y_n})^2$. H\"{o}lder's inequality implies that
\begin{equation*}
x_n^2\leq\left(\sum_{k=1}^{n-1}x_k^2\right)\left(\sum_{k=1}^{n-1}\frac{y_k^2}{y_n^2}\right)<\sum_{k=1}^{n-1}x_k^2.
\end{equation*}
Hence, $P(z)\in \mathbb C\setminus(-\infty,0]$ for $z\in T_B$. Therefore, $(P(z))^{-\alpha-n} $ is well defined for $z\in T_{\Lambda_n} $ and  $P(iy)=-y_1^2-y_2^2-\cdots-y_{n-1}^2+y_n^2=\Delta(y)$ for $y\in \Lambda_n$  and $P(iy)^{\alpha}=\rho(iy)$ is a weight function on $T_{\Lambda_n}$. Then
\begin{eqnarray*}
K(iy)=\frac{4^{\alpha}\Gamma(\alpha+\frac n2+1)\Gamma(2\alpha+2n)\Gamma(\alpha+\frac n2+\frac12)}{\pi^n\Gamma(\alpha+1)\Gamma(2\alpha+n)\Gamma(\alpha+n+\frac 12)}P(iy)^{-\alpha-n}.
\end{eqnarray*}
For $z=x+iy\in T_{\Lambda_n}$, $K(x+iy)$ admits Taylor expansion formula
\begin{equation*}
K(x+iy)=\sum_{k_1=0}^{\infty}\cdots\sum_{k_n=0}^{\infty}\frac{K^{(k_1+\cdots+ k_n)}(iy)}{k_1!\cdots k_n!}x_1^{k_1}\cdots x_n^{k_n},
\end{equation*}
then
\begin{equation*}
K(z)=\frac{4^{\alpha}\Gamma(\alpha+\frac n2+1)\Gamma(2\alpha+2n)\Gamma(\alpha+\frac n2+\frac12)}{\pi^n\Gamma(\alpha+1)\Gamma(2\alpha+n)\Gamma(\alpha+n+\frac 12)}P(z)^{-\alpha-n}.
\end{equation*}
As a result, for $ z,w\in T_{\Lambda_n}$,
\begin{equation*}
K_{\alpha,\Lambda_n}(z,w)=K(z-\bar{w})=C_{4,\alpha}P(z-\bar{w})^{-\alpha-n}.
\end{equation*}
where $C_{4,\alpha}$  is defined by  (\ref{laplacetransform5}). This proves (\ref{laplacetransform4}).
\end{proof}

\vskip 0.5cm{\parindent=0pt

\end{document}